\newtheorem{assumption}[theorem]{Assumption}
\newtheorem{remark}{Remark}
\newtheorem{discussion}{Discussion}
\newtheorem{example}{Example}
\definecolor{colori}{RGB}{166,35,41}
\definecolor{colorii}{RGB}{41,35,166}
\NewDocumentCommand\MyArrow{O{0pt}mmmO{out=150,in=210}}
{%
\begin{tikzpicture}[overlay, remember picture]
  \draw [->,thick,line width=2pt,#4]
    ( $ ({pic cs:#3}|-{pic cs:#2}) + (-#1,1ex) $ ) to[#5]  
    ( $ (pic cs:#3) + (-#1,0) $ );
\end{tikzpicture}%
}
\def\({\left(}
\def\){\right)}
\def\l[{\left[}
\def\r]{\right]}
\newcommand{\compt}{\hookrightarrow\mathrel{\mspace{-15mu}}\rightarrow}
\newcommand{\embed}{\hookrightarrow}
\newcommand{\blue}[1]{\textcolor{black}{#1}}
\newcommand{\re}[1]{\Re\left\{{#1}\right\}}
\newcommand{\im}[1]{\Im\left\{{#1}\right\}}
\def\dom{\Omega}
\def\doms{\Omega\times\Omega}
\def\R{\mathbb{R}}
\def\C{\mathbb{C}}
\def\ubj{\overline{u_j}}
\def\Sv{\textbf{\textit S}}
\def\Bc{\mathcal{B}}	% manifold 
\def\Xhat{{\widehat{X}}}
\def\What{{\widehat{W}}}
\def\Uhat{{\widehat{U}}}
\def\Uhat{{\widehat{U}}}
\def\Mhat{\widehat{M}}
\def\Nhat{\widehat{N}}
\def\Lhat{\widehat{L}}
\def\thetil{\widetilde{\theta}}
\def\thedag{\theta^\dagger}
\def\pihat{\widehat{\pi}}
\def\E{\mathbb{E}}
\DeclareMathOperator{\cov}{Cov_J}
\def\sumj{\frac{1}{J}\sum_{j=1}^J}
\def\sumi{\sum_{i=1}^I}
\def\HS{\mathcal{HS}}
\def\hs{\mathcal{HS}}
\def\R{\mathbb{R}}
\def\Cc{\mathcal{C}}
\def\Yc{\mathcal{Y}}
\def\Ycov{\mathcal{Y}^{cov}}
\def\Ysep{\mathcal{Y}^{sep}}
\def\Dc{\mathcal{D}}
\def\Lc{\mathcal{L}}
\def\A{\mathbb{B}}
\def\B{\mathbb{B}}
\DeclareMathOperator{\Div}{div}
\def\lan{\left\langle}
\def\ran{\right\rangle}
\begin{document}

%\title{The extended adjoint state and nonlinearity in correlation-based\\ passive imaging}

% Sets running headers as well as PDF title and authors
\headers{Correlation-based passive imaging}{}

% Title
\title{The extended adjoint state and nonlinearity \\in correlation-based passive imaging\thanks{Submitted to the editors DATE}
\funding{The author acknowledges support from the DFG through
Grant 432680300 - SFB 1456 (C04)}}

% Authors: full names plus addresses.
\author{Tram Thi Ngoc Nguyen\thanks{Max Planck Institute for Solar Systems Research -- Fellow Group Inverse Problems  -- Justus-von-Liebig-Weg 3, 37077 G\"ottingen,  Germany
  (\email{nguyen@mps.mpg.de}\url{})}}

\maketitle

\begin{abstract}
This articles investigates physics-based passive imaging problem, wherein one infers an unknown medium using ambient noise and correlation of the noise signal. We develop a general backpropagation framework via the so-called extended adjoint state, suitable for any \blue{elliptic} PDE; crucially, this approach reduces by half the number of required PDE solves. Applications to several different PDE models demonstrate the universality of our method. In addition, we analyze the nonlinearity of the correlated model, revealing a surprising tangential cone condition-like structure, thereby advancing the state of the art towards a convergence guarantee for regularized reconstruction in passive imaging.
\end{abstract}

% REQUIRED
\begin{keywords}
  Inverse problems, PDEs, regularization, passive imaging, correlation, covariance operator, tangential cone condition
\end{keywords}

% REQUIRED
\begin{AMS}
  65M32, 65J22, 35R30
\end{AMS}

\section{Introduction}\label{sec:intro}

The advent of the field of \emph{passive imaging} has profoundly impacted how we process measurement noise. Passive imaging refers to imaging with random uncontrolled excitation, in contrast to an active controlled source. By cross-correlating noise recorded at two sensors, one can infer underlying physical mechanisms, for example, wave travel-time between sensors, extracting much details about the intervening medium. 

Historically, the idea of exploiting cross-correlation of noise signals was founded by Duvall et al.~in 1993 for the study of helioseismology \cite{Duvall93}.
Here, solar oscillations are stochastically excited by the turbulent motions in the convection zone \cite{Dalsgaard, GizonBirchReview}.  The cross-correlation of the Dopplergram in the time or frequency domain (cross-covariance) provides much richer information and has yielded great success in imaging the Sun, especially the active region on its far side \cite{Braun_2001, Yang23}. Information is yielded via the Green's function -- the wave solution of a point source -- due to the celebrated Helmholtz-Kirchhoff identity, which proves that correlation of two Green’s functions on a surface 
equals the symmetrized Green’s function in the inhomogeneous interior. This information can then be used to characterize the unknown medium of the interior. We refer to \cite{Papanicolaou16} for a comprehensive overview on passive imaging with ambient noise. 

Passive imaging techniques have also seen great success in Earth seismology, often referred to as seismic interferometry, since illuminating sources are often rare and rather expensive. Correlation-based seismology using only ambient noise sources offers an alternative for, among others, exploration geophysics \cite{ShapiroCampillo04,sabra05},  travel-time retrieval \cite{Claerbout68},  embedded reflectors \cite{Borcea_2003}, volcano monitoring \cite{Brenguier11} and petroleum field monitoring \cite{Draganov13}. Beyond seismology, there are many new emerging areas using correlation data, such as passive synthetic aperture radars \cite{Borcea_2012},  optical speckle illumination \cite{Divitt20}, telescope imaging \cite{HELIN2018132}, aeroacoustics \cite{Raumer} as well as structural health monitoring \cite{Sabra07}.

Recently, \cite{Bjoern23} pioneered the combination of correlation data with iterative \emph{regularization methods} for solar flows and sound speed inference in local helioseismology.
Indeed, regularization theory provides a universal tool with a wealth of qualitative inversion techniques that can be applied to any imaging or inverse problem. Popular algorithms include Tikhonov regularization \cite{Tikhonov}, the factorization method, regularizing
filters \cite{Kirsch}, Landweber-type iteration, Newton-type methods \cite{Kaltenbacher97} and the Kaczmarz scheme, to name just a few.  These solution methods particularly emphasize designing special regularizers, such as priors, low-rank techniques, stopping rules and more, that can effectively address the \emph{ill-posedness} nature of inverse problems. 
For an insight into regularization theory, we refer to the seminal book \cite{EHNBuch}.

\begin{figure}[H] 
\centering
\includegraphics[width=13cm]{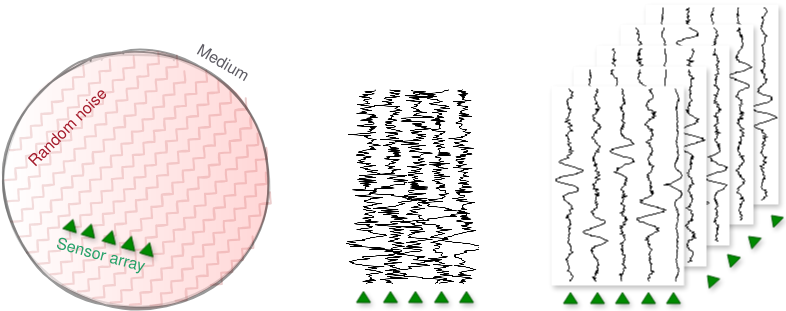}
\caption{Demonstration. Left: Noise source distributed through medium. Middle: Noise signal recorded at sensors. Right: Correlation between pairs of sensors.}
\end{figure}

The challenge of correlation-based passive imaging is twofold. First, by correlating data, we \enquote{square} the dimensionality of the problem. Thus, it is of high priority to derive backpropagators or \underline{adjoint operators}, which form the core of any inversion process, in an efficient and systematic manner. Secondly, correlation  measurement is in fact a nonlinear transformation, exacerbating \underline{nonlinearity} of the model; if the model problem is too nonlinear, any iterative reconstruction method may fail to converge. Overcoming these two difficulties is the central aspect of this study. We take a multidisciplinary approach, borrowing tools from stochastic processes, elliptic partial differential equations (PDEs) and regularization theory to obtain the key results.

By exploiting the covariance structure of data, in particular symmetry and the tensor-product formulation, we characterize the backpropagator via the so-called \enquote{extended adjoint state}. Surprisingly, this exposes a parallel to the classical linear measurement scenario. The extended adjoint state is, however, defined on a squared domain, with Sobolev-Bochner regularity. Crucially, this characterization allows for solving much fewer PDEs in the inversion, and is amenable to parallel computation. In addition, we demonstrate the adjoint computation in several benchmark applications, illustrating universality of the proposed routine. \blue{Our framework is general and valid for any linear elliptic
PDE, and extends to nonlinear equations, requiring neither explicit computation of Green’s functions, nor relying on their properties}. This forms our first key contribution.

The tools to characterize nonlinearity, particularly for ill-posed problems, are called nonlinearity conditions, and include range invariance \cite{Barbara-RangeInv24}, convexity, the Polyak-Lojasiewicz condition \cite{nguyen24} and the tangential cone condition \cite{scherzer95}. These conditions guarantee convergence and stability of various classical regularization methods \cite{KalNeuSch08}. This article focuses on the latter, seeking to leverage the well-established power of the \emph{tangential cone condition} (TCC). However, verifying this condition, even for specific examples and linear measurements, can be very technical \cite{TCC21}. Our findings reveals that there is a TCC-like structure in the correlated model. To the best of our knowledge, the tangential cone condition has yet to be investigated for the case of nonlinear measurements; this forms our second main contribution.

\subsubsection*{Outline} 
The article is organized as follows: Section \ref{sec:ip-formulation} formulates passive imaging as a PDE-based parameter identification problem with covariance measurements. Section \ref{sec:sensitivity-adjoint} studies differentiability of the model's forward map, introduces the extend adjoint state and derives the backpropagator. Based on this, Section \ref{sec:abc-problems and discussion} computes backpropagators for several applications and discusses different extensions. Section \ref{sec:nonlinearity} analyzes the extreme nonlinearity of the correlated model and reveals the TCC-like structure. Finally, Section \ref{sec:conclusion} concludes our findings and suggests future research.

\subsection*{Notation}

The following notational conventions are used throughout this article:
\begin{itemize}[label=$\circ$,leftmargin=0.3cm]
\item The notation $\|\cdot\|_{X\to Y}$ %and $\|\cdot\|_{\Lc(X,Y)}$ 
is  the operator norm, and $\|\cdot\|_{\HS(X,Y)}$ is the Hilbert-Schmidt norm. Recall that the Hilbert-Schmidt norm of a linear bounded operator $K$ between Hilbert spaces $X$, $Y$ is defined as the squared sum of its singular values $\sigma_i$, that is, $\|K\|_{\HS(X,Y)}=(\sum_i \sigma_i(K)^p)^{1/p}$ with $p=2$; %Clearly, Hilbert-Schmidt operators are compact; 
the  generalization to $p\in [1,\infty]$ defines the so-called Schatten $p$-classes.

\item The Sobolev-Bochner space $L^p(\dom;X)$ with $p=2$ and a Banach space $X$ is a Banach space consisting of square-integrable functions $y:\Omega\mapsto X$ with the norm defined in \eqref{Ysep}. The generalization to spaces with $p\in[1,\infty]$ and  to spaces with \enquote{time}-derivatives on the “time”-interval $\dom=[0,T]$ can be found in \cite{Roubicek}.

\item We use $X\embed Y$ for general continuous embeddings in Sobolev, Bochner or Hilbert-Schmidt spaces, with norm denoted by $C_{X\to Y}$. For compact embeddings, we instead write $X\compt Y$.

\item A triple $(X,H,X^*)$ consisting of a Hilbert (pivot) space $H$ and a dense, continuously embedded subspace $X$ with dual $X^*$ is referred to as a \emph{Gelfand triple}. The following Gelfand triples in Sobolev spaces \cite{Evans}, Hilbert-Schmidt spaces \cite{MeiseVogt92} and Bochner spaces \cite{Roubicek} will be frequently used:
\begin{gather*}
X\embed H\embed X^*\\
\HS(X^*,X)\embed \HS(H)\embed \HS(X,X^*)\\
L^2(\dom;X) \embed L^2(\dom;H)\embed L^2(\dom; X^*).
\end{gather*}

\item For generic bounded linear operators $A$, we use $A^\star$ to refer to  Banach space adjoint, whereas $A^*$ is the  Hilbert space adjoint, both with respect to the canonical complex dual parings/inner products. Additionally, $I_X:X\to X^*$ denotes the Riesz isomorphisms between a general Hilbert space $X$ and its dual $X^*$; $(\cdot,\cdot)$ denotes the inner product on a Hilbert space, while $\lan \cdot,\cdot\ran$ refers to the dual paring between Banach spaces.

\item For all that remains, $X$ will denote the real parameter space, $U$ is the state space, $W^*$ is the source space,  and $Y$, $\Yc$ are the data spaces. Each of $X$, $U$, $W$ together with their respective duals and a suitable pivot space is assumed to form a Gelfand triple.

\item Finally, $\overline{(\cdot)}$ is the complex conjugate,  $i$ is the  imaginary unit, and $\Re,\Im$ are  the real and imaginary parts. By $\Bc^X_\rho(\theta)$ we mean the ball in $X$ centered at $\theta$ with radius $\rho>0$.
\end{itemize}

\section{Inverse problems with correlation data}\label{sec:ip-formulation}

We begin by casting passive imaging into a unified framework of parameter identification inverse problem governed by linear PDEs. In this manner, we develop our analysis for a broad class of problems, covering several benchmark examples. As data is prerequisite for inverse problems, we will then thoroughly discuss the role of measurement operators, from traditional linear measurements to the nonlinear covariance measurement employed in passive imaging. 

\subsection{Parameter identification in PDEs}

\subsubsection*{Parametric PDEs in general setting} We study the inverse problem of determining an unknown real-valued parameter  $\theta$ in linear elliptic PDEs. The model parameter $\theta\in X$ and the PDE solution (or state) $u\in U$  are linked via the affine bilinear model $\B$ described as
\begin{align}\label{A}
\A(\theta,u) =:D(\theta)u=f \qquad\text{with } D(\theta)\in\Lc(U,W^*)\qquad\qquad\qquad
\end{align}
in which $D(\theta)$ denotes the parametric linear bounded differential operator, possibly including  boundary conditions. On the right hand side, $f\in W^*$ plays the role of a source term specified on a smooth domain $\Omega$. Moreover, the model $D$ depends on the model parameter $\theta$ in an affine linear manner, that is,
\begin{equation}\label{B}
\begin{split}
\B(\theta,u)=:Ku+B(u)\theta \qquad\text{with }& K\in\Lc(U,W^*), %\\ B(u)\in\Lc(X,W^*), \\
\quad B\in\Lc(U,\Lc(X,W^*))
\end{split}
\end{equation}
with linear bounded operators $K$ and $B$ mapping between corresponding spaces. This formulation covers a wide range of applications, e.g.~the Helmholtz acoustic wave equation $\B(\theta,u) = \Delta  u + \theta u$ with $D(\theta)=\Delta + \theta$, $K=\Delta$, $B=\text{Id}$ and the unknown squared wavenumber $\theta$; many more examples are discussed in Section \ref{sec:abc-problems and discussion}.
We will generally consider a Sobolev space framework, where the source space $W^*$ is taken to be weaker than the state space $U$; this setting is conventional for elliptic PDEs.

Throughout, $\A'_\theta$ and $\A'_u$ denote the partial derivatives; as $\A$ is affine bilinear, we have
\begin{align}\label{A'}
\A'_u(\theta,u) = D(\theta)\in\Lc(U,W^*),\qquad \A'_\theta(\theta,u) = B(u)\in\Lc(X,W^*).
\end{align}
Importantly, we assume the linear differential operator $D(\theta)$ with $\theta$ in some domain $\Dc$ 
to be boundedly invertible. Thus, its inverse is also linear and bounded, i.e.
\begin{align}\label{Dinv}
D(\theta)^{-1}\in\Lc(W^*,U) \quad \text{for }\theta\in\Dc.
\end{align}
This means that for a given model parameter $\theta\in \Dc$ and a source term $f\in W^*$, there exists a unique $u\in U$ satisfying \eqref{A}; in other words, the PDE \eqref{A} is uniquely solvable. Hence, one can define the parameter-to-state map (or PDE solution map) by
\begin{align}\label{S}
S_f:\Dc(\subseteq X)\to U, \quad S_f(\theta):=D(\theta)^{-1}f=u. % \quad\text{s.t }\A(\theta,S(\theta))=0.
\end{align}
Clearly, this map is nonlinear in the unknown $\theta$, despite the PDE being linear in $u$.

\subsubsection*{The role of measurement operators} For the inverse problem of identifying $\theta$, we need additional data; we denote it by $y$. Traditionally, one takes measurements of the state $u$ on the full domain, on the boundary (trace) or on a subdomain $\dom'$ of $\dom$, that is, 
\begin{align}\label{data-linear}
y=M u\quad M: U\to Y \quad\text{with } M=\text{Id}, \, \text{Trc} \text{ or } \chi_{\dom'}.
\end{align}
Here, $Y$ is the data space, which is generally much weaker than the PDE solution space $U$. Importantly, this measurement operator $M$ is \underline{linear} in $u$. Linear, compact observation is the standard scenario in many benchmark inverse problems, such as elastography with full measurements \cite{HubmerScherzer:TCC18, Gesa24}, inverse obstacle and medium scattering with trace observation \cite{bookChadanColtonPaivarintaLassi, Kirsch, HaddarMonk_2002}, electrical impedance tomography with Dirichlet-to-Neumman trace map \cite{Borcea_2002, Harrach21}; we refer to \cite{bookIsakov} for a comprehensive overview on measurement strategies for inverse problem with PDEs.

This leads to the construction of the parameter-to-observable map $F$ by composing the measurement operator and the parameter-to-state map, that is, with
\begin{align}\label{Flinear}
F: \Dc(\subseteq X)\to Y \quad F:= M\circ S_f
\end{align}
serving as the forward operator. The model $F$ constructed in this way is known as the reduced formulation. We remark that there are also other ways to build to the forward map, such as the all-at-once \cite{Kaltenbacher:17, Nguyen:19} and bi-level \cite{nguyen24, nguyen-seqbi} formulations.

The composition structure \eqref{Flinear} exposes an important fact: nonlinearity of the model is inherited directly from that of the well-posed parameter-to-state map $S_f$. On the other hand, its ill-posedness is induced by the compactness of the linear measurement $M$; see Appendix \ref{appendix}. Studying ill-posed inverse problems, especially nonlinear problems, requires careful treatment, even with full data $M=\text{Id}$. Indeed, the challenge lies in constructing regularization methods that can handle both ill-posedness and convergence under nonlinearity.

\subsection{Correlation measurement}

Until this point, we have assumed the source $f$ to be known beforehand, as this is required to establish the parameter-to-state map $S_f$ in \eqref{S}. However, in the context of passive imaging discussed in Section \ref{sec:intro}, excitation is due to turbulent motions or ambient noise, which we do not have direct access or control to; in this setting, $f$ is referred to as a \emph{passive source}.

\subsubsection*{Passive source} 
Such an $f$ is a realization of normally distributed random noise with zero mean and known covariance. The generated signal $u$ is consequently also normally distributed with zero mean; this follows by linearity and well-posedness of the PDE. As such, the contribution of $u$ to e.g.~iterative reconstruction algorithms such as \eqref{Lanweder}, where it enters via addition of a linearly backpropagated term, is $0$ in expectation, and could thus be viewed as uninformative. A more suitable approach is hence to collect raw data, then perform preprocessing by time auto-correlation, equivalently by taking the auto-covariance in the frequency domain. Here, we focus on the latter, keeping the term correlation as conventional terminology. In this way, the source covariance will be translated to the covariance of the observed state.

\subsubsection*{Nonlinear covariance measurements} More precisely, the auto-covariance of the state at two locations $x$, $x'$ in the spatial domain $\dom$ is defined by
\begin{align*}
\mathbb{C}\text{ov}[u,u](x,x')=\E[u(x)\overline{u(x')}] \qquad x,x'\in\dom
\end{align*}
where $\overline{(\cdot)}$ is the complex conjugate, employing the zero mean information $\E[u]=0$.
In practice, one usually estimates the covariance $\mathbb{C}\text{ov}$ by the sample covariance via
\begin{align}\label{sample-cov}
\mathbb{C}\text{ov}[u,u](x,x')\approx
\cov[u,u](x,x'):=
\frac{1}{J}\sum_{j=1}^J u_j(x)\overline{u_j(x')}% =\frac{1}{J}\sum_{j=1}^J S_{f_i}(\theta)(x)\overline{S_{f_i}(\theta)}(x')
\end{align}
for all $x$, $x'\in\dom$, where the right-hand side is interpreted as the $J$-average of correlated sample states. Since each $u_i$ is  the unique state generated from a sample source $f_i$ through the parametric PDE \eqref{S}, we write
\begin{align}\label{ip-data2}
\cov[u,u]=\frac{1}{J}\sum_{j=1}^J S_{f_j}(\theta)\,\overline{S_{f_j}(\theta)}.
%\cov[u,u]=\frac{1}{J}\sum_{j=1}^J D(\theta)^{-1}f_j\,\overline{D(\theta)^{-1}f_j
\end{align}
The $J$-sample covariance is an essential concept, as it will act as the input data for the parameter inversion. At this point, we make an importation observation.

\begin{remark}
While the standard measurement strategy \eqref{data-linear} is linear, the covariance measurement  \eqref{ip-data2} is a \underline{nonlinear} transformation of $u$. This amplifies nonlinearity in the parameter-to-observable map $F$; in particular, nonlinearity in the unknown parameter  $\theta$ is \enquote{squared} after taking correlation.
\end{remark}

\subsection{Passive imaging with covariance data}{\label{sec:ip}

We can now formulate our inverse problem. First, for any $ u\in (U\subseteq) Y$, the sample covariance operator can be viewed as a linear operator belonging to $\HS(Y^*,Y)$, the space of Hilbert-Schmidt operators, with its action given as 
\begin{align}\label{cov-inS}
\cov[u,u]: \varphi\in Y^* \mapsto \frac{1}{J}\sum_{j=1}^J \lan\varphi,u_j\ran_{Y^*,Y}u_j \in Y
\end{align}
via the dual paring $\lan\cdot,\cdot\ran$, where $u_i\in Y$ is sample of the $Y$-valued random variable  $u$. The inclusion $\cov[u,u]\in\HS(Y^*,Y)$ is clear due to finiteness of the singular system. 

We consider the practical choice $Y=L^2(\dom)$ for the observation space, as measurement is always contaminated by noise and generally does not preserve smoothness of $u\in U$. In this case, the Hilbert-Schmidt integral operator $(K\varphi)(x)=\int_{\dom\times \dom} k(x,x')\varphi(x')\,dx'$  has a one-to-one correspondence to its $L^2$-kernel, with $\|K\|_\hs=\|k\|_{L^2(\dom\times \dom)}$. By abuse of notation, we will therefore not distinguish between $\cov$ as an operator and its kernel, employing the same notation for both. We thus focus on the kernel formulation and define the covariance measurement operator
\begin{equation}\label{cov}
\begin{split}
\Cc:U^J\to L^2(\doms)=:\Yc, \qquad &\Cc(u):=\cov[u,u] \quad\text{with } \cov \text{ as in }\eqref{ip-data2}.
\end{split}
\end{equation}
Together with the vectorial parameter-to-state map
\begin{align}\label{Svec}
\Sv_f: \Dc(\subseteq X)\to U^J, \quad \Sv_f:= [S_{f_1}, S_{f_2}\ldots S_{f_J}] \quad\text{with } S_{f_j} \text{ as in }\eqref{S},
\end{align}
we construct the forward operator in the passive imaging problem as
\begin{align}\label{F-inS}
F:  \Dc(\subseteq X) \to \Yc, \qquad F(\theta):=\Cc\circ \Sv_f(\theta).
\end{align}
While this appears similar to \eqref{Flinear}, the nonlinear measurement operator $\Cc$ replaces the linear measurement operator $M$, and the data space is significantly higher-dimensional. 

A key complicating assumption is that we do not have access to individual sources $f_j\in W^*$, but only indirect information in the form of the sample source covariance. By \eqref{S} and \eqref{cov-inS}, the state sample covariance operator can be expressed via the source sample covariance by
\begin{align}\label{F_incov}
&\cov[u,u] %= \cov[D(\theta)^{-1}f,D(\theta)^{-1}f] =\sumj \lan \cdot,D(\theta)^{-1} f_j \ran_{V^*,V} D(\theta)^{-1}f_j   
=\sumj \lan (D(\theta)^{-1})^\star\cdot, f_j \ran_{W,W^*} D(\theta)^{-1}f_j 
= D(\theta)^{-1}\cov[f,f](D(\theta)^{-1})^\star
\end{align}
with $\cov[f,f]\in \HS(W,W^*)$ and the Banach space adjoint $(D(\theta)^{-1})^\star\in\Lc(U^*,W)$. Again, due to the one-to-one correspondence, the kernel $\cov[u,u]$ can be computed from the covariance operator. The expression \eqref{F_incov} provides an alternative for \eqref{F-inS} and is valid also for the exact covariance, i.e.~with $\mathbb{C}\text{ov}$ in place of $\cov$. Furthermore, if $D(\theta)^{-1}$ has an integral representation, e.g.~via the Green's function $G_\theta$, then the covariance kernel can be expressed via $\cov[f,f]$ by
\begin{align}\label{F_inG}
\cov[u,u](x,x')\stackrel{D(\theta)^{-1}f= \langle G_\theta, f\rangle}{=}\int_\dom\int_\dom G_\theta(x,s)\overline{G_\theta(x',s')}\cov[f,f](s,s')\,ds\,ds'.
\end{align}
 This formulation also holds for the exact covariance, in which the double integral can  be further collapsed to a single integral under the assumption of the source being spatially uncorrelated, in the sense that $\mathbb{C}\text{ov}(f,f)(s,s')=\Pi(s)\delta (s-s')$. In this article, we consider the general noise source model $\cov[f,f]\in \HS(W,W^*)$. Taking a stochastic viewpoint, the model \eqref{cov}-\eqref{F-inS} is equivalent to observing $J$ i.i.d.~samples drawn from the state distribution, then assemble the sample covariance \eqref{cov-inS}. As a stochastic estimator of the true state covariance, it is unbiased and consistent, and is known to follow a Wishart distribution due to Gaussianity of the state; for further notes on multivariate stochastics, we refer to \cite{Muirhead82}.
 
The formulations  \eqref{ip-data2}, \eqref{F_incov}, \eqref{F_inG} again highlight the role of the covariance measurement in squaring nonlinearity of the forward map $F$, which we will henceforth also refer to as the parameter-to-covariance map. We now state our passive imaging problem as 
\begin{align}\label{ip}
\text{find }\theta: \quad F(\theta)=y \quad\text{with}\quad F\text{ in }\eqref{F-inS},\text{ given $\cov[f,f]$ and covariance data $y$}.
\end{align}
From now on, we will drop the subscript $f$ in the parameter-to-state maps $S_f$ and $\Sv_f$, keeping their dependence on $f$ via \eqref{S} in mind.

\section{Sensitivity and adjoint state}\label{sec:sensitivity-adjoint}

Differentiability and adjoint derivation are of fundamental importance for any gradient-based inversion scheme. As an example, consider the Landweber (regularized gradient descent) method for solving a general nonlinear equation $F(\theta)=y$ given a noise-corrupted version $y^\delta$ of the true data $y$. The iteration reads as
\begin{align}\label{Lanweder}
\theta_{k+1} = \theta_k - \mu_k F'(\theta_k)^*\(F(\theta_k)-y^\delta\) \quad k=1\ldots k_\text{max}.
\end{align}
Besides identifying a suitable step size $\mu_k$ and stopping index $k_\text{max}$, this method requires evaluations with linearization $F'(\theta_k)$ and Hilbert space adjoint of derivative $F'(\theta_k)^*$. These two key ingredients appear in all gradient-based regularization methods, such as Nesterov-Landweber, Gauss-Newton, Levenberg-Marquardt and so on; see \cite{KalNeuSch08}.

Realized in our setting, the forward operator $F$ is the composition of the nonlinear parameter-to-solution map $S$ with the measurement map, which is linear in the non-correlated case and nonlinear for the case of correlation data. Section \ref{sec:linear} investigates differentiability and adjoint for linear measurements, which will lay the groundwork for  covariance measurements in Section \ref{sec:nonlinear}.

\subsection{Linear measurements}\label{sec:linear}

\begin{lemma}[Linearized state]\label{lem:linearized-state}
The parameter-to-state map $S$ in \eqref{S} is Fr\'echet differentiable, with derivative
\begin{align}\label{linearized-state}
S'(\theta):\Dc (\subseteq X)\to U, \quad S'(\theta)h=\phi \quad\text{where $\phi$ uniquely solves}\quad
D(\theta)\phi=-B(u)h%=-L(h)u
\end{align}
and with the state $u=S(\theta)$; we refer to $\phi$ as the linearized state.
\end{lemma}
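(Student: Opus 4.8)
The plan is to differentiate the defining relation of the parameter-to-state map directly, exploiting the affine bilinear structure of $\B$ together with the bounded invertibility assumption \eqref{Dinv}. By definition \eqref{S}, the state $u=S(\theta)$ satisfies $D(\theta)S(\theta)=f$ for all $\theta\in\Dc$. The key observation is that the map $\theta\mapsto D(\theta)$ is \emph{affine} in $\theta$ by \eqref{B}, since $D(\theta)u=Ku+B(u)\theta$, so $\theta\mapsto D(\theta)u$ has the constant derivative $h\mapsto B(u)h$. I would therefore proceed by formally differentiating the identity $D(\theta)S(\theta)=f$ and then justifying this rigorously.

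First I would establish the candidate derivative. Differentiating $D(\theta)S(\theta)=f$ with respect to $\theta$ in direction $h$ via the product rule gives $\A'_\theta(\theta,u)h + \A'_u(\theta,u)S'(\theta)h=0$, which by \eqref{A'} reads $B(u)h + D(\theta)\,S'(\theta)h=0$. Since $D(\theta)$ is boundedly invertible for $\theta\in\Dc$ by \eqref{Dinv}, I can solve uniquely for $\phi:=S'(\theta)h=-D(\theta)^{-1}B(u)h$, which is exactly the claimed characterization: $\phi$ is the unique solution of $D(\theta)\phi=-B(u)h$. The linearity and boundedness of $\phi$ in $h$ (hence that this is a genuine element of $\Lc(X,U)$) follows from $B\in\Lc(U,\Lc(X,W^*))$ combined with $D(\theta)^{-1}\in\Lc(W^*,U)$.

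The step requiring genuine care, and which I expect to be the main obstacle, is verifying the Fréchet (not merely Gateaux) differentiability, i.e.\ controlling the remainder uniformly in $h$. Here I would write, for $\theta,\theta+h\in\Dc$ with corresponding states $u=S(\theta)$ and $u_h=S(\theta+h)$, the difference $D(\theta+h)u_h-D(\theta)u=0$, and expand using the affine structure $D(\theta+h)=D(\theta)+B(\cdot)h$. The idea is to derive an equation for the remainder $w:=S(\theta+h)-S(\theta)-\phi$ of the form $D(\theta)w = -B(u_h-u)h$, so that $\|w\|_U \le \|D(\theta)^{-1}\|_{W^*\to U}\,\|B\|\,\|u_h-u\|_U\,\|h\|_X$. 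The remaining task is then to show $\|u_h-u\|_U = O(\|h\|_X)$ via a Lipschitz estimate on $S$, which again rests on the bounded invertibility \eqref{Dinv} and a Neumann-series argument guaranteeing $\Dc$ is open and $D(\theta+h)^{-1}$ stays uniformly bounded for small $h$; combining these yields $\|w\|_U = O(\|h\|_X^2) = o(\|h\|_X)$, establishing Fréchet differentiability.

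A clean way to package the Lipschitz estimate is to note that $(u_h-u)$ solves $D(\theta)(u_h-u) = -B(u_h)h$, so $\|u_h-u\|_U \le \|D(\theta)^{-1}\|_{W^*\to U}\,\|B\|\,\|u_h\|_U\,\|h\|_X$, and $\|u_h\|_U$ is uniformly bounded on a neighborhood by continuity of $S$. The main technical subtlety I would flag is ensuring all the operator-norm constants ($\|D(\theta)^{-1}\|$, $\|B\|$) are controlled locally uniformly on $\Dc$; once that is in hand, the quadratic remainder bound is routine, and the uniqueness of $\phi$ is immediate from invertibility of $D(\theta)$.
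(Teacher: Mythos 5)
Your proposal is correct. The first half is exactly the paper's computation: differentiate the identity $\A(\theta,S(\theta))=f$ to obtain $B(u)h+D(\theta)S'(\theta)h=0$ and solve using the bounded invertibility \eqref{Dinv}. Where you diverge is in how Fr\'echet differentiability itself is justified: the paper simply invokes the implicit function theorem for the affine bilinear equation $\A(\theta,u)=f$ and stops there, whereas you verify the definition by hand, deriving the remainder equation $D(\theta)w=-B(u_h-u)h$ for $w:=S(\theta+h)-S(\theta)-\phi$ and combining it with the Lipschitz estimate $D(\theta)(u_h-u)=-B(u_h)h$ and a Neumann-series argument for local uniform boundedness of $D(\cdot)^{-1}$. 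Your route is more elementary and self-contained, yields the explicit quadratic remainder bound $\|w\|_U=O(\|h\|_X^2)$, and in fact anticipates machinery the paper only develops later (the Lipschitz continuity of $S$ appears as Lemma 5.3 under Assumption 5.2, and your Neumann-series observation is essentially what makes the locally bounded inverse feasible there); the paper's route is shorter but leaves the verification of the implicit function theorem's hypotheses implicit. Both arguments are sound and lead to the same characterization of $\phi$.
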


\begin{proof}
The proof is a direct application of the implicit function theorem to the bilinear equation $\A(\theta,u)=f$ in \eqref{A}. In particular, with the partial derivatives of $\B$ in \eqref{B}, the implicit function theorem yields
\begin{align*}
0=\A'_\theta(\theta,u)+\A'_u(\theta,u)S'(\theta)=B(u) + D(\theta) S'(\theta).
\end{align*}
as claimed in \eqref{linearized-state}. Invertibility of $D(\theta)\in\Lc(U,W^*)$ for $\theta\in\Dc$ is assumed in \eqref{S}, and the source $-B(u)h\in W^*$ guarantees unique existence of the linearized state $\phi=-D(\theta)^{-1}B(u)h\in U$.
\end{proof}

We now derive the adjoint for the linearized parameter-to-state map via  solution of the adjoint PDE; this solution is called the adjoint state.

\begin{lemma}[Adjoint state]\label{lem:adj-full}
The Hilbert space adjoint of the linearized parameter-to-state map is
\begin{align}\label{adj-full}
S'(\theta)^* : L^2(\dom,\C)\to X, \quad S'(\theta)^* y  = -I_X\Re B(u)^\star \psi \quad{s.t.}\quad  D(\theta)^\star\psi=  y
\end{align}
with the adjoint state $\psi$ and Riesz isomorphism $I_X:X^*\to X$.
\end{lemma}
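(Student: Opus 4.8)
The plan is to verify the formula directly from the definition of the Hilbert space adjoint, using an adjoint-state (duality) argument to transfer the PDE operator $D(\theta)$ across the pairing. A preliminary point that dictates the whole structure is that the parameter space $X$ is \emph{real} while the data space $Y=L^2(\dom,\C)$ is complex; accordingly, $S'(\theta):X\to Y$ is to be regarded as a bounded $\R$-linear map into the realification of $Y$, so its adjoint $S'(\theta)^*:Y\to X$ is the unique operator satisfying
\begin{align*}
\Re\,(S'(\theta)h,\,y)_{L^2(\dom)} = (h,\,S'(\theta)^*y)_X \qquad \forall\, h\in X,\ y\in Y.
\end{align*}
This is precisely where the real part in \eqref{adj-full} originates.

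First I would substitute the characterization of the linearized state from Lemma~\ref{lem:linearized-state}: with $\phi=S'(\theta)h$ solving $D(\theta)\phi=-B(u)h$, the left-hand side becomes $\Re\,(\phi,y)_{L^2(\dom)}$. Invoking the Gelfand triple $U\embed L^2(\dom)\embed U^*$, I would read the $L^2$ inner product as a dual pairing, $(\phi,y)_{L^2(\dom)}=\langle\phi,y\rangle_{U,U^*}$, so that the datum $y$ is interpreted as an element of $U^*$ via the continuous embedding $L^2(\dom)\embed U^*$. Next I would introduce the adjoint state: since $D(\theta)^{-1}\in\Lc(W^*,U)$ by \eqref{Dinv}, its Banach adjoint $D(\theta)^\star$ is boundedly invertible between the dual spaces, so the adjoint equation $D(\theta)^\star\psi=y$ has a unique solution $\psi\in W$. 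The defining property of $D(\theta)^\star$ then transfers the operator, and a second use of the Banach adjoint of $B(u)$ moves the test direction $h$ to the front:
\begin{align*}
\langle\phi,y\rangle_{U,U^*} = \langle\phi,D(\theta)^\star\psi\rangle = \langle D(\theta)\phi,\psi\rangle_{W^*,W} = \langle -B(u)h,\psi\rangle_{W^*,W} = \langle h,\,-B(u)^\star\psi\rangle_{X,X^*},
\end{align*}
with $-B(u)^\star\psi\in X^*$. Finally I would convert this dual pairing into the $X$-inner product through the Riesz isomorphism $I_X:X^*\to X$, obtaining $(h,\,-I_X B(u)^\star\psi)_X$, and take real parts; because $h$ is real and $I_X$ is real-linear, only the real part of $B(u)^\star\psi$ survives, yielding $S'(\theta)^*y=-I_X\Re B(u)^\star\psi$.

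The main obstacle is the careful bookkeeping of complex conjugations across the several pairings. Although $X$ is real, the adjoint state $\psi\in W$ and hence $B(u)^\star\psi$ live in the complexification $X^*_\C$, so the correct reading of the chain above requires the sesquilinear $L^2$ inner product, the canonical (anti)dual pairings, and the conjugate-linear Riesz map to be tracked consistently. The key claim to make rigorous is the identity $\Re\,(h,\,I_X g)_X=(h,\,I_X\Re g)_X$ for $h\in X$ real and $g\in X^*_\C$, which isolates the real part and produces $\Re B(u)^\star\psi$; this is exactly the step where the realness of the parameter space is used. Every remaining step is a routine application of the invertibility in \eqref{Dinv} and the definitions of the Banach adjoints $D(\theta)^\star$ and $B(u)^\star$.
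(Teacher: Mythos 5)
Your proposal is correct and follows essentially the same route as the paper's proof: defining the adjoint via the real part of the complex $L^2$ inner product, passing to the $U,U^*$ dual pairing through the Gelfand triple, introducing the adjoint state $\psi$ via the invertibility of $D(\theta)^\star$, transferring $D(\theta)$ and $B(u)$ across the pairings, and finishing with the Riesz isomorphism and the real part. Your explicit isolation of the identity $\Re\,(h, I_X g)_X=(h, I_X\Re g)_X$ for real $h$ is a slightly more careful statement of the step the paper handles by remarking on the complexification of $X$, but the argument is the same.
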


\begin{proof} 
First of all, since the parameter space $X$ is real, we impose on the data space $L^2(\dom, \C)$ the real-valued inner product
\begin{align}\label{real-inprod}
(f,g)^\R_{L^2}:=\(\re{f},\re{g}\)_{L^2}+\(\im{f},\im{g}\)_{L^2}=\re{(f,g)_{L^2}},
\end{align}
that is, the real part of the canonical complex-valued inner product.
 
By invertibility of $D(\theta)\in\Lc(U,W^*)$, thus of its Banach space adjoint $D(\theta)^\star\in\Lc(W,U^*)$, there exists for any $y\in L^2(\dom,\C)\subseteq U^*$ some $\psi\in W$ such that $D(\theta)^\star \psi=y$. Using the linearized state $\phi$ in \eqref{linearized-state}, the representation \eqref{adj-full}, we have, for any $h\in X$, $y\in Y$,
\begin{align*}
\( S'(\theta)h, y \)^\R_{L^2}&\stackrel{\eqref{real-inprod}}{=}\Re\lan S'(\theta)h, y \ran_{U,U^*} \stackrel{\eqref{linearized-state},\eqref{adj-full}}{=} \Re \lan \phi, D(\theta)^\star \psi\ran_{U,U^*} = \Re \lan D(\theta)\phi, \psi\ran_{W^*,W} \\
&\stackrel{\eqref{linearized-state}}{=} \Re\lan -B(u)h, \psi\ran_{W^*,W} 
=\Re \lan h, - B(u)^\star \psi\ran_{X,X^*}=\( h, -I_X\Re B(u)^\star \psi\)_X
\end{align*}
with the complex Banach space adjoint $B(u)^\star\in\Lc(W,X^*)$, although strictly speaking, this requires working with the complexification of the real-valued space $X$ and its dual, then computing the adjoint with respect to the canonical complex inner product, which typically leads to straightforward computations; see  Section \ref{sec:abc} for examples. 
Then, the real-valued Banach space adjoint with respect to the original, non-complex space $X$ follows simply by taking the real part $\Re$. 

In the last step above, the Gelfand triple property of $X\embed L^2(\dom)\embed X^*$ allows us to transfer the dual paring to the inner product via the isomorphism $I_X$. The proof is complete.
\end{proof}

\begin{remark}[Adjoint - linear measurement] For general linear measurement operators $M$ as in \eqref{Flinear}, one has $F'(\theta)^* = S'(\theta)^* M^*$ with $M^*: Y\to L^2(\dom)$ and  $S'(\theta)^*:L^2(\dom) \to X$ in Lemma \ref{lem:adj-full}. As two examples, $M=\text{Id}$ has the adjoint $M^*=\text{Id}$, while the restriction $M=\chi_{\dom'\subsetneq\dom}$ satisfies $M^*=\text{E}^0_{\dom'\to\dom}: L^2(\dom')\to L^2(\dom)$, which is the zero extension to the full domain. 
\end{remark}

We remark that the adjoint state sees widespread usage not only in PDE-based inverse problems, but also in optimal control \cite{Troeltzsch}, mainly in the context of linear observations. Hence, it is natural to ask whether there exists a similar concept for the correlated model. This line of thought will guide us towards the next section.

\subsection{Covariance measurements} \label{sec:nonlinear}

First of all, we recall from Section \ref{sec:ip-formulation} that covariance measurements live in  $\Yc=L^2(\doms)$. This is a canonical and conventional choice; however, it does not exploit the \enquote{covariance structure} of the data. We thus begin by better characterizing the internal structure in the data space $\Yc$, revealing in particular its summability and separability, paving the way for the adjoint derivation.

\begin{definition}

Define by
\begin{align}\label{Ycov}
\Ycov:=\left\{ y\in L^2(\doms)\,|\, \exists I<\infty: y(x,x')=\sumi y_i(x)\overline{y_i(x')} \text{ where } y_i\in L^2(\dom) \right\}
\end{align}
the cone in $\Yc$ consisting of functions on the covariance form, and by
\begin{align}\label{Ysep}
\Ysep:=\left\{ y\in L^2(\doms)\,|\, \exists I<\infty: y(x,x')=\sum_{i=1}^I p_i(x)q_i(x') \text{ where } p_i,\,q_i\in L^2(\dom) \right\}
\end{align}
the subspace of $\Yc$ consisting of functions that can be decomposed into a finite sum of products.

\end{definition}
\begin{remark}[Covariance and 
decomposable structure]\label{rem:cov-strucure} The space $\Ysep$ can be viewed as having a tensor product structure. Clearly, it contains the covariance cone $\Ycov$, with
\[\Ycov \quad\subset\quad \Ysep \quad\subset\quad \Yc = L^2(\doms).\]
In $\Ycov$, all elements are Hermitian symmetric, in the sense $y(x,x')=\overline{y(x',x})$; the sets $\Ycov$ and $\Ysep$ overall carry much more structure than the original data space $\Yc$.
\end{remark}

In the following, we introduce the notion of an \enquote{extended adjoint state} with two key properties. Firstly, differing from the standard adjoint state in Lemma \ref{adj-full}, the extended adjoint state is defined on the extended/squared domain $\doms$. Secondly, thanks to the setting $\Ysep$, the extended adjoint state will be shown to live in the Sobolev-Bochner space $L^2(\dom;W)$ with the norm  
\begin{align}\label{bochner-norm}
y:\Omega\mapsto W, \quad \|y\|_{L^2(W)}:=\(\int_\dom \|y(\cdot,x')\|^2_{W}\,dx'\)^{1/2}.
\end{align}
The Bochner space $L^2(\Omega;W)$ is a Hilbert space when $W$ is a Hilbert space, and  $L^2(\dom;W)\subseteq L^2(\dom;L^2(\dom))\cong L^2(\doms)$ for $W\subseteq L^2(\dom)$; see \cite{Roubicek}.

\begin{lemma}[Extended adjoint state]\label{lem:adj-extend} For any covariance data $y\in\Ysep$, there exists a decomposable function $\Psi\in L^2(\dom;W)$ which solves the extended adjoint PDE
\begin{align}\label{adj-extend}
\forall^\text{a.e.} x'\in\dom:\quad D_1(\theta)^\star \Psi = y.
\end{align}
Here, $D_1(\theta)^\star:=D(\theta)^\star$ in \eqref{A}, with the subscript denoting the differential operator acting on $x$, the first variable of $\Psi$. We call $\Psi$ the extended adjoint state.
\end{lemma}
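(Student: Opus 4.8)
The plan is to exploit the invertibility of the adjoint operator $D(\theta)^\star$, established in the proof of Lemma \ref{lem:adj-full}, together with the finite decomposable structure of $y\in\Ysep$, and to solve the extended adjoint PDE \eqref{adj-extend} one slice at a time in the first variable. Since \eqref{Dinv} guarantees $D(\theta)^{-1}\in\Lc(W^*,U)$, its Banach-space adjoint $D(\theta)^\star\in\Lc(W,U^*)$ is boundedly invertible with $(D(\theta)^\star)^{-1}\in\Lc(U^*,W)$. The crucial observation is that \eqref{adj-extend} is a PDE in the first variable $x$ only, parametrized by $x'$; thus for each fixed $x'$ the right-hand side $y(\cdot,x')\in L^2(\dom)\subseteq U^*$ determines a unique solution $\Psi(\cdot,x')=(D(\theta)^\star)^{-1}y(\cdot,x')\in W$.

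First I would fix a decomposition $y(x,x')=\sum_{i=1}^I p_i(x)\,q_i(x')$ with $p_i,q_i\in L^2(\dom)$, guaranteed by the definition \eqref{Ysep}; existence (not uniqueness) of such a representation suffices. Applying the inverse slicewise and using its linearity, I obtain
\begin{align*}
\Psi(\cdot,x') = (D(\theta)^\star)^{-1}\sum_{i=1}^I p_i(\cdot)\,q_i(x') = \sum_{i=1}^I \Psi_i(\cdot)\,q_i(x'), \qquad \Psi_i := (D(\theta)^\star)^{-1}p_i \in W.
\end{align*}
This shows at once that $\Psi$ inherits the decomposable (tensor-product) form, now with $W$-valued spatial factors $\Psi_i$ and scalar factors $q_i\in L^2(\dom)$, and that it solves \eqref{adj-extend} for a.e.\ $x'$ by construction.

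It remains to certify that this $\Psi$ belongs to the Sobolev-Bochner space $L^2(\dom;W)$. Strong (Bochner) measurability of $x'\mapsto\Psi(\cdot,x')$ is immediate, as it is a finite sum of the fixed vectors $\Psi_i\in W$ weighted by the scalar measurable functions $q_i$. For integrability I would estimate the Bochner norm \eqref{bochner-norm} by the triangle inequality followed by Cauchy-Schwarz over the finite index set,
\begin{align*}
\|\Psi\|_{L^2(W)}^2 = \int_\dom \Big\| \sum_{i=1}^I \Psi_i\,q_i(x') \Big\|_W^2 \,dx' \;\le\; I\sum_{i=1}^I \|\Psi_i\|_W^2\,\|q_i\|_{L^2(\dom)}^2 \;<\;\infty,
\end{align*}
where finiteness of $I$ and $q_i\in L^2(\dom)$ are precisely what make the sum converge, and boundedness of the inverse controls $\|\Psi_i\|_W\le\|(D(\theta)^\star)^{-1}\|_{U^*\to W}\|p_i\|_{L^2(\dom)}$.

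The main obstacle is conceptual rather than computational: one must be careful that the pointwise-in-$x'$ construction assembles into a genuine element of the Bochner space, i.e.\ that the slicewise solution operator commutes with the finite decomposition and yields a strongly measurable, square-integrable $W$-valued map. The finiteness of the sum in \eqref{Ysep} is exactly the structural ingredient that sidesteps the delicate measurability and integrability issues which would otherwise arise for a general $y\in\Yc=L^2(\doms)$; this is why the statement is posed on the subspace $\Ysep$ rather than on all of $\Yc$.
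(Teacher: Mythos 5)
Your proposal is correct and follows essentially the same route as the paper: decompose $y\in\Ysep$ into a finite sum of products, solve the standard adjoint equation $D(\theta)^\star\psi_i=p_i$ for each spatial factor using bounded invertibility of $D(\theta)^\star$, assemble $\Psi$ as the corresponding finite tensor sum, and verify the Bochner norm is finite (your Cauchy--Schwarz constant $I$ versus the paper's $2^{I-1}$ is immaterial). The only cosmetic difference is that the paper writes the second factor with a complex conjugate, $\overline{q_i}$, which changes nothing since conjugation preserves membership in $L^2(\dom)$.
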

\begin{proof}
As $y\in\Ysep$ by assumption, we have $y=\sumi y_i\overline{q_i}$, for some families $(y_i)_{i=1}^I$, $(q_i)_{i=1}^I\subset L^2(\dom)$, $I<\infty$; see \eqref{Ysep}. Arguing as in Lemma \ref{lem:adj-full}, we claim that for each source $y_i\in L^2(\dom)$, there exists a unique $\psi_i\in W$ satisfying the adjoint equation
\begin{align}\label{adj}
\exists!\, \psi_i=\psi_i(x)\in W: \quad D(\theta)^\star \psi_i = y_i.
\end{align}
This yields for a.e.~$x'\in\dom$ that
\begin{align*}
y(\cdot,x') = \sumi y_i\overline{q_i(x')} = \sumi \left[D(\theta)^\star \psi_i\right]\overline{q_i(x')}= D(\theta)^\star \(\sumi  \psi_i\overline{q_i(x')}\). 
\end{align*}
Matching \eqref{adj-extend}, this means that the extended adjoint state $\Psi$ takes the form
\begin{align}\label{adj-ext-1}
\Psi(x,x'):=\sumi  \psi_i(x)\overline{q_i(x')}, \qquad \psi_i\in W, q_i\in L^2(\dom);
\end{align}
moreover, it lives in the Sobolev-Bochner space \eqref{bochner-norm}, as
\begin{equation*}\label{adj-ext-2}
\begin{split}
\|\Psi\|^2_{L^2(W)}&\leq2^{I-1}\sumi\int_\dom \|\psi_i(\cdot) q_i(x')\|_W^2\,dx'=2^{I-1}\sumi\int_\dom \|\psi_i(\cdot)\|_W^2| q_i(x')|^2\,dx'\\
&=2^{I-1}\sumi \|\psi_i\|^2_W\|q_i\|^2_{L^2} <\infty.
\end{split}
\end{equation*}
We observe from \eqref{adj-ext-1} that $\Psi$ also has a decomposable structure; the proof is thus complete.
\end{proof}

\begin{remark}[Computing the extended adjoint state]\label{rem:adj-ext}
The structure of $\Psi$ suggests that it may be computed in two different ways. The first option is via \eqref{adj-ext-1}, which solves $I$ standard adjoint PDEs \eqref{adj-full} for $\psi_i, i=1\ldots I$, then takes the outer product with $\overline{q_i}$, and sums them up afterwards. Alternatively, via \eqref{adj-extend}, one can solve the standard adjoint PDE on each $x'$-slice/column of the data $y$. In this case, the number of PDE applications depends on the discretization dimension $N$ of $\dom$, and the decomposition of data $y$ into explicit $y_j$, $q_j$ is not needed. In both cases, the $I$ or $N$-times PDE solving processes can be done on parallel machines as they are independent, \blue{albeit accounting for memory limitations on the degree of parallelization.}
\end{remark}

\begin{figure}[H] 
\centering
\includegraphics[width=13.5cm]{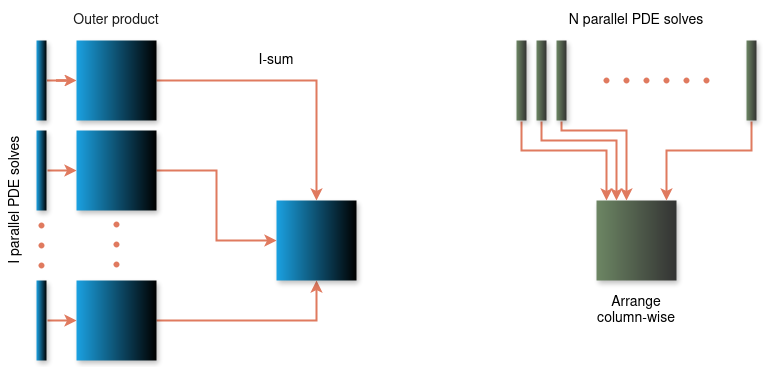}
\caption{Extended adjoint state computation. Left via \eqref{adj-ext-1}: parallel PDE solution  $\psi_i, i=1\ldots I$ --  outer product with $q_i$ -- take $\sum_i^I$. Right via \eqref{adj-extend}: parallel PDE solution $\Psi(\cdot,x'_n), n=1\ldots N$ -- collect and arrange in column-wise. }
\end{figure}

We now turn our attention to the cone $\Ycov$ for the main result.

\begin{theorem}[Adjoint - covariance measurement]\label{theo:adj-cov}
The Hilbert space adjoint of the linearized parameter-to-covariance map is
\begin{equation}\label{adj-cov}
\begin{split}
&F'(\theta)^*: \Ycov\to X, \quad F'(\theta)^*y=-2 I_X \Re\int_\dom B^{cov}_1(u,x')^\star  \Psi(\cdot,x')\,dx' \\[1ex]
&\text{with }\ B^\text{cov}_1(u,x'):=B(\cov(u,u)(\cdot,x'))\in\Lc(X,W^*)\quad \text{for a.e. } x'\in\dom, \text{ all } u\in U
\end{split}
\end{equation}
and $\Psi$ is the extended adjoint state in Lemma \ref{lem:adj-extend}.
\end{theorem}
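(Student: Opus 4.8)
The plan is to extend the duality computation of Lemma~\ref{lem:adj-full} to the quadratic measurement, the two new ingredients being a product-rule linearization of $\Cc$ and a symmetry argument on the cone $\Ycov$ that produces the factor $2$. I would first compute $F'(\theta)$ by the chain rule applied to $F=\Cc\circ\Sv$. Writing $u_j=S_{f_j}(\theta)$ and $\phi_j=S'_{f_j}(\theta)h$ as in Lemma~\ref{lem:linearized-state}, and using that the kernel $\Cc(u)=\frac1J\sum_j u_j\overline{u_j}$ is sesquilinear in $u$ (so that its quadratic increment is $o(\|\phi\|)$ and $\Cc$ is Fr\'echet differentiable), the product rule yields
\begin{align*}
F'(\theta)h=\frac1J\sum_{j=1}^J\bigl(\phi_j\,\overline{u_j}+u_j\,\overline{\phi_j}\bigr).
\end{align*}
The conjugate term is legitimate since $\theta$ and $h$ are real, so that $\overline{S_{f_j}(\theta)}$ differentiates to $\overline{\phi_j}$; note that this is exactly where the \enquote{squared} nonlinearity becomes visible.

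Next I would test $F'(\theta)h$ against $y\in\Ycov$ in the real inner product $(z,y)^\R_\Yc=\Re\int_\doms z\,\overline{y}$, obtaining two double integrals from the two summand families. Applying the change of variables $x\leftrightarrow x'$ together with the Hermitian symmetry $y(x,x')=\overline{y(x',x)}$ of $\Ycov$ (Remark~\ref{rem:cov-strucure}), I would show that the $u_j\overline{\phi_j}$-contribution coincides with the $\phi_j\overline{u_j}$-contribution inside $\Re$. This collapses the pairing to twice a single term and thereby accounts for the factor $2$ in \eqref{adj-cov}.

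On the surviving term I would integrate over $x$ first, which is justified by the Bochner regularity $\Psi\in L^2(\dom;W)$ and the finite decomposition from Lemma~\ref{lem:adj-extend}, and recognise the inner integral as the pairing $\lan\phi_j,y(\cdot,x')\ran_{U,U^*}$. Since $\Ycov\subset\Ysep$, Lemma~\ref{lem:adj-extend} furnishes the extended adjoint state with $D(\theta)^\star\Psi(\cdot,x')=y(\cdot,x')$ for a.e.~$x'$; transposing $D(\theta)^\star$ onto $\phi_j$ and inserting the linearized state equation $D(\theta)\phi_j=-B(u_j)h$ replaces the pairing by $-\lan B(u_j)h,\Psi(\cdot,x')\ran_{W^*,W}$. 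Pulling the scalar $\overline{u_j(x')}$ inside $B$ by complex-linearity of $B\in\Lc(U,\Lc(X,W^*))$ and identifying $\frac1J\sum_j\overline{u_j(x')}\,u_j=\cov(u,u)(\cdot,x')$ turns $\frac1J\sum_j\overline{u_j(x')}B(u_j)$ into $B^{cov}_1(u,x')$. It then remains to pass to the Banach adjoint $B^{cov}_1(u,x')^\star$, pull the $x'$-independent $h$ out of the $x'$-integral, and convert the dual pairing into the $X$-inner product via the Riesz isomorphism $I_X$ and the real part, exactly as in Lemma~\ref{lem:adj-full}, which produces \eqref{adj-cov}.

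I expect the symmetry step to be the main obstacle: it demands careful bookkeeping of complex conjugation under the swap $x\leftrightarrow x'$ and is precisely the place where the Hermitian structure of $\Ycov$, rather than the mere tensor-product structure of $\Ysep$, is indispensable. A secondary technical point is the interchange of summation and integration in the third step, which rests on the summability and Bochner-space regularity already secured in Lemma~\ref{lem:adj-extend}.
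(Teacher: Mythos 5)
Your proposal is correct and follows essentially the same route as the paper's proof: the product-rule linearization $F'(\theta)h=\sumj(\phi_j\overline{u_j}+u_j\overline{\phi_j})$, the swap $x\leftrightarrow x'$ combined with Hermitian symmetry of $\Ycov$ to produce the factor $2$, the slice-wise transposition of $D(\theta)^\star$ via the extended adjoint state, the use of linearity of $B$ in $u$ to assemble $B^{cov}_1(u,x')=B(\cov(u,u)(\cdot,x'))$, and the final passage through the Riesz isomorphism. The points you flag as delicate (conjugation bookkeeping under the swap, and the interchange of sum and integral justified by $\Psi\in L^2(\dom;W)$) are exactly the ones the paper also addresses.
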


\begin{proof}
With the state $u_j=S_{f_j}(\theta)$ and the linearized state $\phi_j=S_{f_j}'(\theta)h$ in Lemma \ref{lem:linearized-state}, one differentiates $F$ via the chain rule as
\begin{align}\label{linearization}
F'(\theta)h = (\cov\circ \Sv)'(\theta)h = \sumj u_j\overline{\phi_j} + \phi_j \ubj.
\end{align}
For any $y\in\Ycov$, we now express the $\Yc$-inner product as
\begin{align*}
\(F'(\theta)h, y\)_{\Yc}&=\sumj \int_\dom\int_\dom \( u_j(x)\overline{\phi_j(x')} + \phi_j(x) \overline{u_j(x')} \) \overline{y(x,x')}\,\,dx\,dx'\\
&=\sumj \int_\dom\int_\dom  \overline{\phi_j(x)}u_j(x')\overline{y(x',x)}\,\,dx\,dx'+ \int_\dom\int_\dom  \phi_j(x) \overline{u_j(x')}\,\overline{y(x,x')}\,\,dx\,dx',
\end{align*}
swapping the variables $x$ and $x'$ in the first double integral. Since $y\in\Ycov$, $y$ is Hermitian symmetric with $\overline{y(x',x)}=y(x,x')$; see also Remark \ref{rem:cov-structure}. We thus further simplify
\begin{align*}
\(F'(\theta)h, y\)_{\Yc}%&=\sumj \int_\dom\int_\dom  \overline{\phi_j(x)}u_j(x')y(x,x')\,\,dx\,dx'+ \int_\dom\int_\dom  \phi_j(x) \overline{u_j(x')}\,\overline{y(x,x')}\,\,dx\,dx'\\
&= 	2\Re\sumj \lan \phi_j(\cdot) \overline{u_j(\cdot\cdot)}, y(\cdot,\cdot\cdot)\ran.
\end{align*}

In the next step, we write
$D_1(\theta)\Psi=D(\theta)\Psi$ for clarity, where the subscript again indicates that the operator $D(\theta)\in\Lc(U,W^*)$ acts on $\Psi(\cdot,x')$. 
In particular, using the extended adjoint state $\Psi$ in Lemma  \ref{lem:adj-extend}, we express the inner product for any $h\in X$ and $y\in\Ycov$ as
\begin{align}\label{cov-adj-compute} 
\(F'(\theta)h, y\)_{\Yc}&\stackrel{\text{Lem. }\ref{lem:adj-extend}}{=}2\Re\sumj \Big\langle \phi_j(\cdot) \overline{u_j(\cdot\cdot)}, D_1(\theta)^\star \Psi(\cdot,\cdot\cdot) \Big\rangle
=2\Re\sumj\Big\langle D_1(\theta)\phi_j(\cdot) \overline{u_j(\cdot\cdot)},  \Psi(\cdot,\cdot\cdot) \Big\rangle \nonumber\\
&\stackrel{\text{Lem. } \ref{lem:linearized-state}}{=} 2\Re\sumj \Big\langle -B(u_j)h \overline{u_j(\cdot\cdot)}, \Psi(\cdot,\cdot\cdot) \Big\rangle \nonumber\\
&\stackrel{B\text{ linear}}{=} 2\Re\Big\langle -B\Big(\sumj u_j\overline{u_j(\cdot\cdot)}\Big) h, \Psi(\cdot,\cdot\cdot) \Big\rangle \stackrel{\text{Def. in }\eqref{adj-cov}}{=} 2\Re \Big\langle -B^{cov}_1(u,\cdot\cdot)h,  \Psi(\cdot,\cdot\cdot) \Big\rangle \\
&= 2\Re \Big\langle h, -B^{cov}_1(u,\cdot\cdot)^\star  \Psi(\cdot,\cdot\cdot) \Big\rangle.\nonumber
\end{align}
Well-definedness in the above expressions can be checked e.g.~by verifying in the second equality that $\Psi\in L^2(\dom;W)$ by Lemma \ref{lem:adj-extend}, together with $D_1(\theta)\phi_j\overline{u_j}\in L^2(\dom;W^*)$ by $D_1(\theta)\phi_j(\cdot)\in W^*$ and $u_j(\cdot\cdot)\in U\subseteq L^2(\dom)$. The fourth equality demonstrates the importance of affine bilinearity of the model $\A(\theta,u)$ in \eqref{A}, by which we can employ linearity of $B(\cdot)$ with respect to $u$.

As a side note, the Banach space adjoint $B(\cdot)^\star$ is conjugate linear in $u$, in the sense that
$
\lan \theta, B(u+\lambda v)^\star w\ran_{X,X^*}=\lan B(u)\theta+\lambda B(v)\theta, w\ran_{W^*,W}=\lan \theta, B(u)^\star w+ \overline{\lambda}B(v)^\star w\ran_{X,X^*}
$ holds
for any $u$, $v\in U$, $\theta\in X$, $w\in W$, $\lambda\in\C$. The adjoint $B^{cov}_1(u,x')^\star$ thus connects to $B(u)^\star$ via
\begin{align*}
B^{cov}_1(u,x')^\star= \(\sumj B(u_j) \overline{u_j(x')} \)^\star %=\sumj u_j(x') B(u_j)^\star 
= B\(\cov(u)(\cdot,x')\)^\star;
\end{align*}
in other words, $B^{cov}_1(u,x')^\star$ is for a.e.~$x'$ simply $B(u)^\star$ with $\cov(u,u)(\cdot,x')$ in place of $u$.

Returning to \eqref{cov-adj-compute} for the final step, since $h=h(x)$ is a one-variable function, collapsing the $x'$-dimension by integration then passing through the isomorphism $I_X$ similarly to Lemma \ref{adj-full}, we arrive at the inner product in $X$ as
\begin{align*}
\(F'(\theta)h, y\)_{\Yc} = 2\Re  \Big( h, -B^{cov}_1(u,\cdot\cdot))^\star  \Psi(\cdot,\cdot\cdot)\Big)_{L^2(\doms)}= \(h, -2 I_X\Re \int_\dom B^{cov}_1(u,x')^\star  \Psi(\cdot,x')\,dx' \)_X.
\end{align*}
This is the claim \eqref{adj-cov}, and the proof is thus complete.
\end{proof}
\bigskip

With the backpropagator derived, we close the section with some important remarks.

\begin{remark}[Covariance vs full measurement]

\begin{enumerate}[label=\roman*)]
\item The covariance adjoint \eqref{adj-cov} and full measurement adjoint \eqref{adj-full} share rather similar forms. In particular, \eqref{adj-cov} has $\cov(u,u)$ in place of $u$ in \eqref{adj-full}; they otherwise differ only by a factor $2$ and the presence of the integral, exposing an interesting connection between the two adjoints.

\item The extended adjoint equation \eqref{adj-extend} is in fact the standard adjoint equation \eqref{adj-full} solved column-wise/slice-wise in the data $y\in\Ysep$ (c.f. Remark \ref{rem:adj-ext}). In other words, the extended adjoint state $\Psi$ is a column-wise stack of several standard adjoint state $\psi$, reinforcing the connection between the two models.
\end{enumerate}
\end{remark}

\begin{remark}[Hermitian property]\label{rem:cov-structure}
\begin{enumerate}[label=\roman*)]
\item The real part $\Re$ appears in both full  and covariance adjoints. 
However, \eqref{adj-cov} is real-valued not due to the imposed real-valued inner product \eqref{real-inprod} in Lemma \ref{lem:adj-full}, but rather, due to the covariance structure itself. Indeed, for any $y$, $z\in\Ycov$, which are by construction Hermitian symmetric (see Remark \ref{rem:cov-strucure}), the real and complex inner products coincide, as
\begin{align*}
(y,z)_\Yc^\C%&=\frac{1}{2}\int_{\doms} y(x,x')\overline{z(x,x')}\,dx\,dx'+\frac{1}{2}\int_{\doms} y(x,x')\overline{z(x,x')}\,dx\,dx'\\
&=\frac{1}{2}\int y(x,x')\overline{z(x,x')}\,dx\,dx'+\frac{1}{2}\overline{\int y(x',x)\overline{z(x',x)}\,dx'\,dx} %\qquad(\in\R)\\
=(y,z)_\Yc^\R.
\end{align*} 
This justifies the view that the choice of the real inner product $(\cdot,\cdot)_\Yc^\R$ on $\Ycov$ is, in some sense, the natural one. 

\item The index $I$ in $\Ycov$, $\Ysep$ can differ from the sample size $J$ in $\cov$. In addition, Theorem  \ref{theo:adj-cov} only requires decomposable and symmetric data, allowing for any noisy observation $y^\delta$ of some exact but inaccessible data. Note that noisy measurements are taken prior to cross-correlating; hence, $y^\delta$ always has the prerequisite covariance structure despite the presence of noise corruption. In fact, it is sufficient to work in the cone $\Yc^\text{sep, sym}$:
\[
\Ycov \subset\quad \Yc^\text{sep, sym}:=\{y\in\Ysep\,|\,y\text{ Hermitian}\}\quad\subset \Ysep\subset \Yc.
\]
\end{enumerate}
\end{remark}

\begin{remark}[Computational \blue{aspects}]\,
\begin{enumerate}[label=\roman*)]
\item 
As discussed in Section \ref{sec:ip}, we do not have direct access to $f$, but rather an estimate of its covariance. This is reflected in Theorem \ref{theo:adj-cov}, as not $u$, but rather $\cov(u,u)$ emerges in the adjoint \eqref{adj-cov}; this sample covariance can be computed from $\cov(f,f)$  via \eqref{F_incov} or \eqref{F_inG}.

\item The characterization of the covariance adjoint can readily be integrated into any classical regularization method, e.g.~Landweber iteration \eqref{Lanweder}, yielding Algorithm \ref{Landweber-algorithm}.

The appearance of $\cov(u,u)$ in the adjoint \eqref{adj-cov} means that the forward-propagation \ref{S1} is re-used in the backpropagation \ref{S3} in Algorithm \ref{Landweber-algorithm}. In \ref{S2}, only one time PDE-block solver is called for the extended adjoint state. Alternatively, directly backpropagating from \eqref{F_incov} involves $D(\theta_k)^{-1}$ and $(D(\theta_k)^{-1})^\star$, requiring two blocks of PDE solves; in other words, our suggestion halves the number of required PDE compared to the direct approach.

\item
\blue{Numerical implementation depends on the exact discretization employed. 
For example, the slice-based computation is naturally carried out for collocation point-based discretization schemes, e.g.~finite difference. Meanwhile, finite element-based schemes require more subtle numerical analysis, and might benefit greatly from, e.g.~low-rank methods \cite{HalkoMartinssonTropp}, which enable rapid operator evaluation while avoiding full operator assembly, e.g.~for covariance operators \cite{aarset24}.} 

\end{enumerate}
\end{remark}

\begin{algorithm}[H]
\caption{Landweber iteration for passive imaging}
\label{Landweber-algorithm}
Initialization: $\theta_0$\\
While $k\leq k_\text{max}$: 
\begin{enumerate}[label= S.\arabic*]
\item\label{S1} Forward evaluate $F(\theta_k)=\cov(u,u)$ as \eqref{F_incov} or \eqref{F_inG}
\item\label{S2} Parallel computation of extended adjoint state $\Psi$ as \eqref{adj-extend} 
\item\label{S3} Backpropagate residual $F'(\theta_k)^*r$ as \eqref{adj-cov} using $\Psi$ and residual $r:=F(\theta_k)-y^\delta$
\item\label{S4} Update parameter $\theta_{k+1}$ as \eqref{Lanweder} with appropriate step size $\mu_k$
\item\label{S5} Check stopping rule $k_\text{max}$ according to a suitable a-priori or posterior rule 
\end{enumerate}
\end{algorithm}

\section{The $a, b, c$-problems and extension}\label{sec:abc-problems and discussion}

We now apply the developed adjoint state-based framework to several imaging applications. Our goal is to demonstrate that this framework yields backpropagators in explicit form via transparent computations. This also opens up various points for extension, which we discuss at the end of the section.

\subsection{Passive imaging for $a, b, c$  parameter inference}\label{sec:abc}  
Consider the following PDE on a smooth domain $\dom\subset\R^d, d\leq 3$  with the unknown physcial parameters $\theta:=(a,b,c)$:
\begin{equation}\label{abc}
\begin{cases}
&-\Div(a\nabla u)+ b\cdot \nabla u+icu = f \quad \text{in } \dom \\
&u=0 \quad \text{on } \partial\dom.
\end{cases}
\end{equation}
that we refer to as the \emph{$a,b,c$-problem}. Such equations have several real-world applications, for instance population evolution \cite{Malthus} for the $c$-problem, substance advection or transportation \cite{SocolofskyJirka} for the $b$-problem, sediment formation, groundwater filtration \cite{BanksKunisch} for the $a$-problem. A Hilbert space setting that ensures well-posedness of \eqref{abc} is
\begin{equation}\label{abc-space}
\begin{split}
&u\in U:=H_0^1(\dom), \qquad f\in W^*:=H^{-1}(\dom),\qquad \theta\in X:=(X_a,X_b,X_c),\\
&0<\underline{a}\leq a\in X_a:= H^2(\dom), \quad b\in X_b:=H^1(\dom,\R^d), \quad c\in X_c:=L^2(\dom),
\end{split}
\end{equation}
with appropriate smallness constraints on $b$ and $c$ in relation to $a$ to maintain uniform ellipticity, the key for PDE well-posedness. Positivity of $a$ along with these smallness conditions informs the choice of the domain $\Dc$ of the solution map $S$ in \eqref{S}. General Banach space frameworks can be found in \cite{LadyzhenskayaUraltseva, HoffmanWaldNguyen:2021}. Here, the homogeneous boundary condition is incorporated into the state space. 

\begin{proposition}\label{prop:abc}
Consider the passive imaging problem for $\theta=(a,b,c)$ in 
\eqref{abc}-\eqref{abc-space}.\\
The parameter-to-covariance map $F$ 
has its adjoint derivative applying to any $y\in\Ycov$  as 
\begin{align}\label{abc-adj}
F'(\theta)^* y 
=\begin{bmatrix}
-2I_{X_a}\Re\int_\dom \nabla_x\(\overline{\cov(u,u)}\)\cdot  \nabla_x \Psi\, dx' \\
-2I_{X_b}\Re\int_\dom \nabla_x\(\overline{\cov(u,u)}\)\Psi \, dx'\\
-2\Im\int_\dom \overline{\cov(u,u)}\Psi \, dx'
\end{bmatrix}\in (X_a,X_b,X_c)
\end{align}
with the \blue{Riesz} isomorphisms $I_{X_a}:X_a^*\to X_a$, $I_{X_b}:X_b^*\to X_b$. Here, the extended adjoint state $\Psi$ solves the  adjoint PDE
\begin{align}\label{abc-adj-eq}
\forall^\text{a.e.}x'\in\dom:
\begin{cases}
-\Div_x(a\nabla_x \Psi(\cdot,x'))\blue{-} \Div_x(b\Psi(\cdot,x')) -ic\Psi(\cdot,x') = y(\cdot,x') \quad\text{in } \dom\\
\Psi(\cdot,x')=0 \quad \text{on }\partial\dom
\end{cases}
\end{align}
where the subscript $x$ indicates differentiation in the $x$ variable.
\end{proposition}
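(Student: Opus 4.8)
The plan is to specialize the general result of Theorem~\ref{theo:adj-cov} to the concrete operator in \eqref{abc}; accordingly, most of the work lies in \emph{identification} and a componentwise integration by parts, with no new abstract analysis required. First I would recast \eqref{abc} in the affine bilinear form \eqref{A}--\eqref{B}. The differential operator is $D(\theta)u=-\Div(a\nabla u)+b\cdot\nabla u+icu$, which is linear in $u$ and linear (hence affine) in $\theta=(a,b,c)$; since the PDE carries no $\theta$-independent part, one reads off $K=0$ and
\begin{align*}
B(u)(a,b,c)=-\Div(a\nabla u)+b\cdot\nabla u+icu\in W^*=H^{-1}(\dom).
\end{align*}
A short verification using the Sobolev embeddings valid for $d\le 3$ (so that $a\in H^2\embed L^\infty$, $b\in H^1\embed L^6$, and $u\in H^1_0\embed L^6$) shows $B\in\Lc(U,\Lc(X,W^*))$, while Lax--Milgram under the stated uniform ellipticity and the smallness constraints on $b,c$ furnishes the bounded invertibility \eqref{Dinv}. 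This confirms that \eqref{abc}--\eqref{abc-space} meets the hypotheses of Lemmas~\ref{lem:linearized-state}--\ref{lem:adj-extend} and of Theorem~\ref{theo:adj-cov}, so the master formula \eqref{adj-cov} applies.

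Next I would derive the extended adjoint PDE \eqref{abc-adj-eq} by computing the formal adjoint $D(\theta)^\star$ through integration by parts against a test function $\psi\in W=H^1_0(\dom)$. Because $a$ is real the principal part is self-adjoint; the drift transposes via $\int(b\cdot\nabla u)\psi=-\int u\,\Div(b\psi)$ to the term $-\Div(b\,\cdot)$; and conjugating the imaginary unit gives $(ic\,\cdot)^\star=-ic\,\cdot$. Collecting these yields precisely \eqref{abc-adj-eq}, which by Lemma~\ref{lem:adj-extend} is solved slice-wise in $x'$ by the extended adjoint state $\Psi\in L^2(\dom;W)$.

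The core step is the Banach space adjoint $B(v)^\star\in\Lc(W,X^*)$ for a generic $v\in U$, computed componentwise, after which I substitute $v=\cov(u,u)(\cdot,x')$ as dictated by the definition of $B^{cov}_1$ in \eqref{adj-cov}. Pairing $B(v)(a,b,c)$ with $\Psi$ and integrating by parts isolates the three components of $B(v)^\star\Psi\in X_a^*\times X_b^*\times X_c^*$, which as $L^2$-functionals read, respectively,
\begin{align*}
\nabla\overline v\cdot\nabla\Psi,\qquad \nabla\overline v\,\Psi,\qquad -i\,\overline v\,\Psi,
\end{align*}
paired against $a$, $b$, $c$ (up to the conjugation bookkeeping discussed below). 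Feeding these into \eqref{adj-cov}, integrating over $x'$, taking $\Re$, and applying the Riesz maps $I_{X_a}$, $I_{X_b}$ (noting that $X_c=L^2(\dom)$ is self-dual, so its Riesz map is the identity and is suppressed) reproduces the three rows of \eqref{abc-adj}. The last row carries $\Im$ rather than $\Re$ precisely because of the imaginary unit in the $c$-term, via $\Re(-i\,z)=\Im(z)$, which simultaneously fixes its sign.

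The main obstacle I anticipate is the consistent handling of complex conjugation rather than any estimate. The parameter space $X$ is real whereas $U,W$ are complex, so one must complexify $X$, compute the \emph{complex} Banach space adjoint---which, as noted in the proof of Theorem~\ref{theo:adj-cov}, is conjugate linear in $v$ and thus produces the conjugate $\overline{\cov(u,u)}$ appearing in \eqref{abc-adj}---and only afterwards take the real part mandated by the inner product \eqref{real-inprod}. Tracking the imaginary unit through the zeroth-order term is the delicate point, since it is exactly what turns $\Re$ into $\Im$ and pins down the sign in the third component; by contrast, the $a$- and $b$-rows follow routinely once the integrations by parts are set up.
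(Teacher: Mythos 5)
Your proposal is correct and follows essentially the same route as the paper's proof: identify $K=0$, $D(\theta)$ and $B(u)$ in the affine bilinear form \eqref{A}--\eqref{B}, compute $D(\theta)^\star$ and $B(u)^\star$ by integration by parts with the homogeneous Dirichlet condition, and substitute $\cov(u,u)$ into the master formula \eqref{adj-cov} of Theorem~\ref{theo:adj-cov}, with the third row's $\Im$ arising exactly as you say from $\Re(-iz)=\Im(z)$. The only difference is that you spell out the well-posedness verification (Sobolev embeddings for $d\le 3$, Lax--Milgram), which the paper treats as part of the standing setup \eqref{abc-space} rather than of the proof.
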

\begin{proof} 

The proof will follow by application of Theorem \ref{theo:adj-cov}, which necessitates explicit analysis of the elements of \eqref{adj-cov}.

For \eqref{A}-\eqref{B}, one specifies the affine  bilinear model $\A\in\Lc(X\times U,W^*)$, the bounded invertible operator $D(\theta)\in\Lc(U,W^*)$ and the linear bounded operators $B(u)\in\Lc(X,W^*), K\in\Lc(U,W^*)$ via
\begin{align*}
&\A(\theta,u):=-\Div(a\nabla u)+ b\cdot \nabla u+icu = D(\theta)u = B(u)\theta+Ku\\
&D(\theta)=-\Div(a\nabla (\cdot))+ b\cdot\nabla +ic, \quad B(u)= \begin{bmatrix}
-\Div(\cdot\nabla u)\\ (\cdot)\cdot \nabla u\\ iu \end{bmatrix}, \quad K=0.
\end{align*} 
Using integration by part while taking into account the homogeneous Dirichlet boundary, we obtain the complex Banach space adjoints of the above as
\begin{align*}
D(\theta)^\star=-\Div(a\nabla (\cdot))\blue{-} \Div(b\,\cdot) -ic, \qquad B(u)^\star =\begin{bmatrix} \nabla \bar{u}\cdot\nabla \\\nabla \bar{u} \\-i\bar{u}\end{bmatrix},
\end{align*}
recalling that the state $u$ is complex-valued, while $a$, $b$, $c$ are real functions. 

Inserting $D(\theta)^\star$ into the extended adjoint PDE \eqref{adj-extend} yields the explicit form  \eqref{abc-adj-eq}, whose solution is the extended adjoint state $\Psi=\Psi(x,x')$. It is clear in \eqref{abc-adj-eq} that each $x'$-slide of $\Psi$ is the solution of the standard adjoint PDE (c.f.~Remark \ref{rem:adj-ext}).
With $\Psi$ and $B(u)^\star$, one can explicitly describe $B^\text{cov}_1(u)^\star=B(\cov(u,u))^\star$ in Theorem \ref{theo:adj-cov} by
\begin{align}
&B^\text{cov}_1(u))^\star \Psi
=\begin{bmatrix}
\nabla_x\(\overline{\cov(u,u)}\)\cdot \nabla_x \Psi \\
\nabla_x\(\overline{\cov(u,u)}\)\Psi\\
-i\, \overline{\cov(u,u)}\Psi
\end{bmatrix}.
\end{align}
As these steps provide all necessary insight into the elements in the expression \eqref{adj-cov}, we attain the backpropagator as \eqref{abc-adj} claims. 
\end{proof}

As an example, 
if $a$ and $b$ have vanishing boundaries, then one can choose for the isomorphisms $I_{X_a}=(\Delta^2)^{-1}$, $I_{X_b}= (-\Delta)^{-1}$ in the above. Proposition \ref{prop:abc} illustrates the transparency and universality of the proposed backpropagator in practice. Its computation is straightforward: one constructs the adjoint of the differential operators $D(\theta)$ and of $B(u)$; then inserts them into \eqref{adj-cov} of Theorem \ref{theo:adj-cov}.
 
\subsection{Discussion on extensions}\label{sec:discussion}

\begin{discussion}[General space settings]\label{dis-general-space}
Proposition \ref{prop:abc} provides the explicit adjoint formulation for \eqref{abc}
not only in the framework \eqref{abc-space}, but also in other general settings, requiring only minimal change, primarily in the Riesz isomorphisms $I_X=(I_{X_a},I_{X_b},I_{X_c})$. As an example, one could instead set
$U=H^2(\dom)\cap H_0^1(\dom)$, $W^*=L^2(\dom)$ and $(X_a,X_b,X_c)=(H^2(\dom), H^1(\dom), H^1(\dom))$. Comparing to \eqref{abc-space}, there is an increase in regularity, meaning smoother sources and parameters inducing smoother states. Lifted or shifted regularity is in fact a natural phenomenon in linear PDEs given sufficiently smooth domain $\dom$, see e.g.~\cite[Chapter 6.3]{Evans}.
\end{discussion}

\begin{discussion}[Other linear PDEs]
It is straightforward to adapt Proposition \ref{prop:abc} to other linear elliptic PDEs, e.g.~bi-Laplace equations; see Example \ref{ex:billap}.
\end{discussion}

\begin{discussion}[Nonlinear PDEs in the parameter] So far we have focused on bilinear PDEs in the form $\B(\theta,u)=D(\theta)u=Ku+B(u)\theta$. In fact, this analysis can be naturally transferred to equations that are nonlinear in $\theta$. 

In this case, linearization yields $\B'_\theta(\theta,u)$, as opposed to $B(u)$. In Lemma \ref{lem:linearized-state}, the linearized equation now takes the form $D(\theta)\phi=-\B'_\theta(\theta,u)h$. Consequently, in Lemma \ref{lem:adj-full}, the full measurement adjoint becomes $S'(\theta)^* y  = -I_X \Re\,\B'_\theta(\theta,u)^\star \psi$. In the end, the covariance adjoint in Theorem \ref{theo:adj-cov} is modified to $F'(\theta)^*y=-2 I_X \Re\int_\dom \B'_\theta(\theta,\cov(u,u))^\star  \Psi(\cdot,x')\,dx'$; see Example \ref{ex:billap}.
\end{discussion}

\begin{discussion}[Nonlinear PDEs in the state] 
The extension to nonlinearity in the state, however, is nontrivial. The argument on linearity in $u$ is essential for the proof of Theorem \ref{theo:adj-cov}. In addition, the source-to-state map $f\mapsto u$  in this scenario is also nonlinear, leading to several obstacles: the state covariance cannot be computed from the source covariance \eqref{F_incov}; stochastically, the distribution of $u$ is also not generally available, despite Gaussianity of $f$. 
\end{discussion}

\begin{example}\label{ex:billap}
We here study the bi-Helmholtz (biharmonic) model arising in wave dispersion of atomic models \cite{Lotfinia02012021}, nonlocal strain-gradient elasticity \cite{LAZAR20061404} and stream function formulations of inertial waves \cite{Damien22}. We consider Cauchy boundary \cite{Lotfinia02012021} and the unknown parameter $k=k(x)\in X$ appearing nonlinearly: 
\begin{align*}
\begin{cases}
\Delta^2u - k^2\Delta u + u= f \quad \text{in } \dom\\
\partial_n u = u=0 \quad \text{on }\partial \dom
\end{cases}
\end{align*}
with exterior normal vector $n$. The passive imaging problem for $k$ has the backpropagator
\begin{align*}
F'(k)^*y =  4I_X\Re \(k \int_\dom\Delta_x\(\overline{\cov(u,u)}\)\Psi\,dx'\) \qquad \forall y\in\Ycov,
\end{align*}
where the extended adjoint state $\Psi$ solves
\begin{align*}
\forall^\text{a.e.}x'\in\dom:
\begin{cases}
\Delta^2_x\Psi(\cdot,x') - \Delta_x(k^2\ \Psi(\cdot,x')) + \Psi(\cdot,x')= y(\cdot,x') \quad \text{in } \dom\\
\partial_n \Psi(\cdot,x') =\Psi(\cdot,x') = 0 \quad \text{on }\partial \dom.
\end{cases}
\end{align*}
In practice, one often finds $\theta:=k^2$ for simplicity. 
\end{example}

\section{Nonlinearity analysis}\label{sec:nonlinearity}

We now turn our attention to a different topic, namely the degree of nonlinearity in the problem. This is particular relevant for passive imaging since, as discussed in Section \ref{sec:intro}, not only the parameter-to-state map, but also the covariance observation map is nonlinear. If the model is too nonlinear, there is in general no guarantee for convergence of reconstruction. Thus, there is a need to quantify the nonlinearity inherent in the model; moreover, this quantification should be suitable for ill-posed problems, and account for the PDE aspect. 

Motivated by the tangential cone condition invented in \cite{scherzer95}, we will make two additional assumptions (Assumptions \ref{ass:bound-Dinv} and \ref{ass:space-lift}) on the regularity of the forward model, yielding a TCC-like bound on the linearization error.

\subsection*{The tangential cone condition}
The tangential cone condition (TCC) for a general model $F:X\to \Yc$ requires that 
\begin{equation}\label{TCC-general}
\begin{split}
\|F(\theta)-F(\thedag)-F'(\theta)(\theta-\thedag)\|_\Yc \leq& c_{tc}\|F(\theta)-F(\thedag)\|_\Yc \quad \forall \theta\in\Bc^X_\rho(\thedag), c_{tc}<1
\end{split}
\end{equation}
holds locally around a ground truth $\thedag\in X$ and that the TCC constant $c_{tc}$ must be strictly less than one. Unlike the Taylor expansion, the expression \eqref{TCC-general} constrains the first order linearization error relative to the image difference. This measure particularly suits nonlinear inverse problems, whose ill-posedness is caused by compactness, which is the case here by Corollary \ref{cor:compactness}; see also Appendix \ref{appendix}. Indeed, when $F$ is compact, despite large difference in the arguments $\|\theta-\thedag\|$, the image difference $\|F(\theta)-F(\thedag)\|$ can be rather small, or even zero; thus, an error bound such as that offered by the Taylor expansion is not particular helpful.

From  \eqref{TCC-general}, it follows immediately by the triangle inequality that
\[
\frac{1}{1+c_{tc}}\|F'(\theta)(\thedag-\theta)\| \leq \|F(\thedag)-F(\theta)\| \leq \frac{1}{1-c_{tc}}\|F'(\theta)(\thedag-\theta)\|;
\]
in other words, the graph of $F$ must lie within the double cone whose slopes are decided by the tangential cone constant (Figure \ref{bg-fig-tcc}). This nonlinearity condition guarantees convergence for a wide range of gradient-based iterative regularization methods \cite[Theorem 2.6]{KalNeuSch08}, and even uniqueness of a minimal-norm solution \cite[Proposition 2.1]{KalNeuSch08}. 

\begin{figure}[H] 
\centering
\includegraphics[width=9cm]{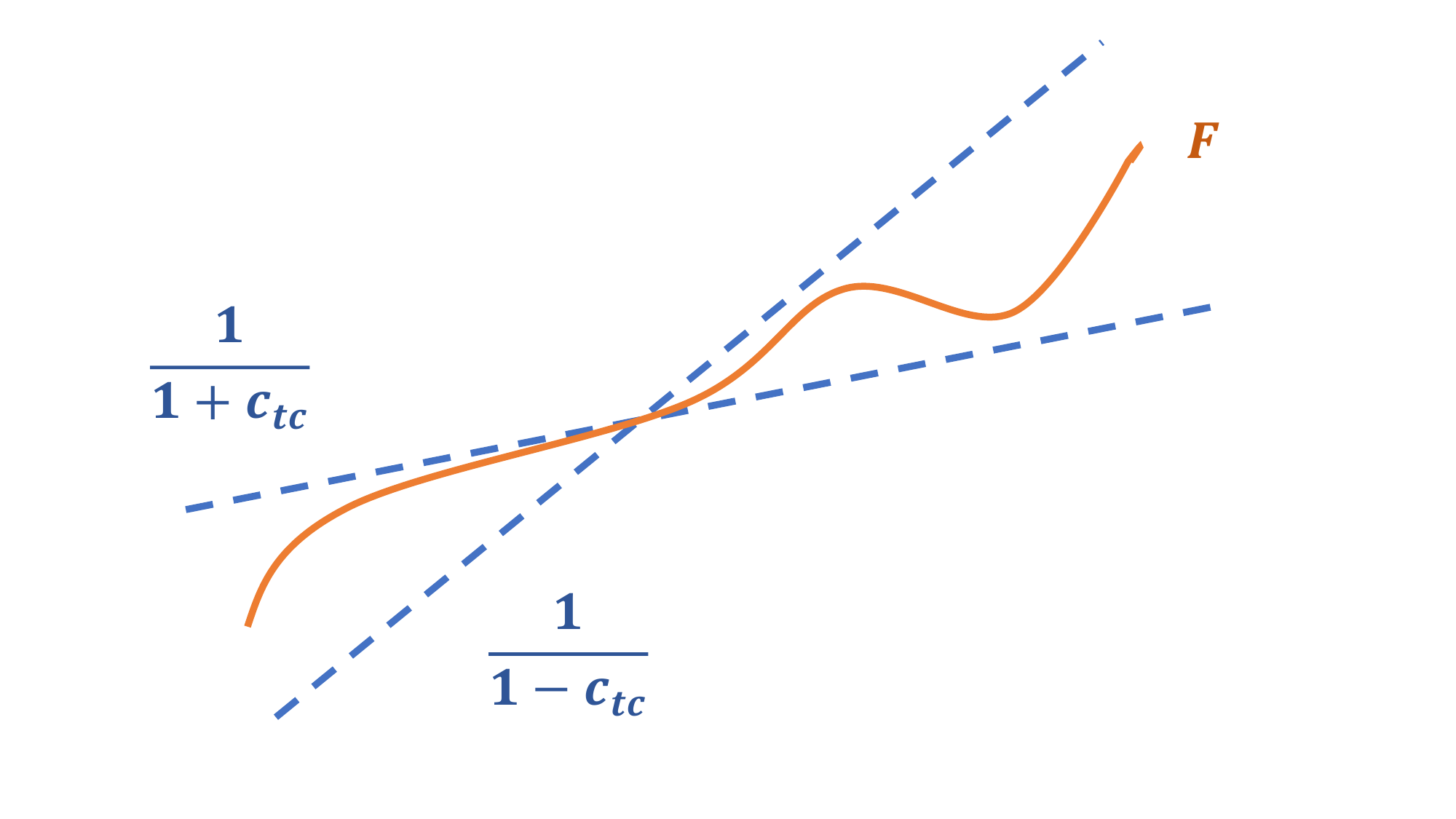}
\caption{Tangential cone condition (TCC) }
\label{bg-fig-tcc}

\end{figure} 
Despite its potency, this condition is highly technical to verify; we refer to \cite{TCC21} for a general verification strategy. Interesting research concerning the TCC includes elliptic inverse problems \cite{HoffmanWaldNguyen:2021}, quantitative elastography \cite{Nakamura:MRE21, HubmerScherzer:TCC18}, electrical impedance tomography \cite{Kindermann21}, full wave form inversion \cite{EllerRolandRieder24} and recently neural networks \cite{ScherzerHofmannNashed, holler22learning_parameter_id}. All these works deal with linear measurements; recall that the parameter-to-state map $S$ is always nonlinear.
 
\subsection*{TCC-like linearization error}

Inspired by the TCC, we shall establish a linearization error bound for the covariance observation, keeping in mind that it squares nonlinearity of the parameter-to-state map. Explicitly, we aim to show an inequality on the form
\begin{equation}\label{Elin-general}
\begin{split}
E_{\text{lin}}:=\|F(\theta)-F(\thetil)-F'(\theta)(\theta-\thetil)\| \leq C\|\theta-\thetil\|\|F(\theta)-F(\thetil)\| + N\|\theta-\thetil\|^2 %\text{for all } \theta\in\Bc_\rho(\thetil)\subseteq\Dc\text{ with some } C,N> 0
\end{split}
\end{equation}
for suitable constants $C$, $N$ and with suitable constraints on the parameters $\theta$, $\thetil$. Clearly, for $\thetil:=\thedag$, a sufficiently small radius $\rho>\|\theta-\thetil\|$ and setting $c_{tc}:=C\rho< 1$, the tangential cone constant becomes strictly smaller than one, as required. This local behavior is expected for nonlinear inverse problems. Indeed, the initial guess must be sufficiently close to the ground truth $\thedag$, otherwise there is no hope for convergence of iterative reconstruction methods in general. Thus, the first term on the right hand side of \eqref{Elin-general} implies the TCC \eqref{TCC-general}. To establish the TCC fully, the second term of \eqref{Elin-general}, in the form of the Taylor expansion error, needs to vanish or be dominated again by the image $\|F(\theta)-F(\thetil)\|$. This leads to a stability estimate problem, which we leave for future work; see Remark \ref{rem:TCC}. 

The linearization error in the form of \eqref{Elin-general} clearly displays a TCC-like structure. Thus, it represents a significant step towards obtaining convergence guarantees of passive imaging via iterative regularization, a situation which has yet to be investigated.

\subsection{Auxiliary results}\label{sec:linearize-auxiliary}

Before diving into the detail, we sketch our path to the main goal, which involves the following steps:

\begin{itemize}
    \item Lipschitz properties
    \item Cross-correlation estimate
    \item Lifted regularity
    \item Linearization error
\end{itemize}

For the time being, we will turn our attention to showing the first three of these. Two sets of assumptions will be made for this purpose: Assumptions \ref{ass:bound-Dinv} and  \ref{ass:space-lift}.

We begin with the Lipschitz property of the parametric differential map $\theta\mapsto D(\theta)$ defined in \eqref{A}, and moreover of its inverse \eqref{Dinv} under suitable boundedness assumptions. 

\begin{assumption}[Bounded inverse]\label{ass:bound-Dinv}
There is a functional $\pi:\R^+\to\R^+$ that maps any bounded set to a bounded set and satisfies
\begin{align}\label{ass-bound-Dinv}
\|D(\theta)^{-1}\|_{W^*\to U}\leq \pi(\|\theta\|_X)
\end{align}
for any $\theta\in\Dc\subseteq X$.
\end{assumption}

\begin{lemma}[Lipschitz property of $D$ and its inverse] \label{lem:lipschitz}
 We have the following:
\begin{itemize}
\item[i)] The map $D:X\to \Lc(U,W^*)$ is  Lipschitz continuous, and 
\begin{align}\label{Lip-D}
\|D(\theta)-D(\thetil)\|_{U\to W^*}\leq L_D\|\theta-\thetil\|_X \quad\text{with }L_D:=\|B\|_{U\to \Lc(X,W^*)}
\end{align}
for all $\theta,\thetil\in 	X$.

\item[ii)] Let Assumption \ref{ass:bound-Dinv} hold. The inverse  $D(\cdot)^{-1}:\Dc\to\Lc(W^*,U)$ is locally Lipschitz continuous: for all $\theta\in \Bc^X_\rho(\thetil)\subseteq \Dc$, there exists a constant $L_{inv}(\|\thetil\|_X,\rho)$ satisfying
\begin{align}\label{Lip-Dinv}
\|D(\theta)^{-1}&-D(\thetil)^{-1}\|_{W^*\to U} \leq L_{inv}(\|\thetil\|_{X},\rho)\|\theta-\thetil\|_{X}.
\end{align}
Consequently, the parameter-to-state map $S$ in \eqref{S} is locally Lipschitz continuous.
\end{itemize}
\end{lemma}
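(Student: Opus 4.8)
The plan is to prove the two claims separately, the first via the affine bilinear structure and the second via the algebraic identity relating the difference of inverses to the difference of operators.

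For part i), I would start from the explicit form of $D$ in \eqref{B}. Since $\B(\theta,u)=Ku+B(u)\theta$ is affine linear in $\theta$, the map $\theta\mapsto D(\theta)$ is itself affine, so its difference $D(\theta)-D(\thetil)$ acts on any $u\in U$ by $(D(\theta)-D(\thetil))u = B(u)(\theta-\thetil)$. Taking the operator norm over $u$ and using $B\in\Lc(U,\Lc(X,W^*))$ then gives
\begin{align*}
\|(D(\theta)-D(\thetil))u\|_{W^*} = \|B(u)(\theta-\thetil)\|_{W^*} \leq \|B\|_{U\to\Lc(X,W^*)}\|u\|_U\|\theta-\thetil\|_X,
\end{align*}
and dividing by $\|u\|_U$ and taking the supremum yields \eqref{Lip-D} with $L_D=\|B\|_{U\to\Lc(X,W^*)}$. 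This step is essentially immediate from the structure \eqref{B}.

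For part ii), the key algebraic tool is the resolvent-type identity
\begin{align*}
D(\theta)^{-1}-D(\thetil)^{-1} = D(\theta)^{-1}\bigl(D(\thetil)-D(\theta)\bigr)D(\thetil)^{-1},
\end{align*}
valid whenever both inverses exist (i.e. $\theta,\thetil\in\Dc$). Taking norms, applying submultiplicativity, and inserting \eqref{Lip-D} for the middle factor reduces the estimate to controlling $\|D(\theta)^{-1}\|_{W^*\to U}$ and $\|D(\thetil)^{-1}\|_{W^*\to U}$. The factor $\|D(\thetil)^{-1}\|$ is bounded by $\pi(\|\thetil\|_X)$ directly from Assumption \ref{ass:bound-Dinv}. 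For the factor $\|D(\theta)^{-1}\|$ with $\theta\in\Bc^X_\rho(\thetil)$, I would use the triangle inequality $\|\theta\|_X\leq\|\thetil\|_X+\rho$ together with the monotonicity/boundedness property of $\pi$ (it maps bounded sets to bounded sets) to bound it uniformly by some constant depending only on $\|\thetil\|_X$ and $\rho$. Combining these gives the local Lipschitz constant
\begin{align*}
L_{inv}(\|\thetil\|_X,\rho) := L_D\,\pi(\|\thetil\|_X)\sup_{\theta\in\Bc^X_\rho(\thetil)}\pi(\|\theta\|_X),
\end{align*}
which establishes \eqref{Lip-Dinv}.

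Finally, the Lipschitz property of $S$ follows by writing $S(\theta)-S(\thetil)=(D(\theta)^{-1}-D(\thetil)^{-1})f$ and applying \eqref{Lip-Dinv} with the fixed source $f\in W^*$, yielding a local Lipschitz constant $L_{inv}(\|\thetil\|_X,\rho)\|f\|_{W^*}$. I expect the main subtlety to be not the algebra but the uniform control of $\|D(\theta)^{-1}\|$ over the ball $\Bc^X_\rho(\thetil)$: one must ensure the whole ball lies inside the admissible domain $\Dc$ and invoke Assumption \ref{ass:bound-Dinv} carefully so that the supremum of $\pi$ over the ball is genuinely finite. Everything else is a routine application of the resolvent identity and submultiplicativity of operator norms.
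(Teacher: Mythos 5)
Your proposal is correct and follows essentially the same route as the paper: part i) is the identical computation from the affine structure $(D(\theta)-D(\thetil))u=B(u)(\theta-\thetil)$, and part ii) is the resolvent identity (which the paper derives equivalently by writing the PDE $D(\theta)w=-(D(\theta)-D(\thetil))v$ for the difference of states) combined with Assumption \ref{ass:bound-Dinv} to bound $\|D(\theta)^{-1}\|$ uniformly over the ball. Your remark about taking the supremum of $\pi$ over the ball is, if anything, slightly more careful than the paper's shorthand $\pi(\|\thetil\|_X+\rho)$.
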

\begin{proof}
i) Recall that $D(\theta)$ in \eqref{A} is affine linear in $\theta$ and is represented via the linear bounded operator $B(u)$ in  \eqref{B}. We thus have
\begin{align*}
\|D(\theta)-D(\thetil)\|_{U\to W^*}&%=\sup_{\|u\|_U\leq 1}\|(D(\theta)-D(\thetil))u\|_{W^*}
=\sup_{\|u\|_U\leq 1}\|B(u)(\theta-\thetil)\|_{W^*}%\leq \sup_{\|u\|_U\leq 1}\|B(u)\|_{\Lc(X\, W^*)}\|\theta-\thetil\|_X
\leq \sup_{\|u\|_U\leq 1}\|B\|_{U\to \Lc(X,W^*)}\|u\|_U\|\theta-\thetil\|_X \nonumber\\
&=\|B\|_{U\to \Lc(X,W^*)}\|\theta-\thetil\|_X
\end{align*}
as claimed in \eqref{Lip-D}, with the operator norm of $B$ being the Lipschitz constant.

ii) Let $D(\theta)u=f$ and $D(\thetil)v=f$. The difference $w:=u-v$ now solves the PDE
\begin{align*}
D(\theta)w=-(D(\theta)-D(\thetil))v,
\end{align*}
implying
\begin{align*}
\|D(\theta)^{-1}&-D(\thetil)^{-1}\|_{W^*\to U}=\sup_{\|f\|_{W^*}\leq 1}\|(D(\theta)^{-1}-D(\thetil)^{-1})f\|_{U}
= \sup_{\|f\|_{W^*}\leq 1}\|w\|_{U}\\
&\leq\sup_{\|f\|_{W^*}\leq 1}\|D(\theta)^{-1}\|_{W^*\to U}\|D(\theta)-D(\thetil)\|_{U\to W^*}\|D(\thetil)^{-1}\|_{W^*\to U}\|f\|_{W^*}\\
&\leq L_D\pi(\|\thetil\|_{X}+\rho)\pi(\|\thetil\|)\|\theta-\thetil\|_{X}=: L_{inv}(\|\thetil\|_{X},\rho)\|\theta-\thetil\|_{X}
\end{align*}
by employing the Lipschitz constant $L_D$ in \eqref{Lip-D} and the locally bounded inverse assumption \eqref{ass-bound-Dinv}.

As a consequence, for any $\theta,\thetil$ in the domain $\Dc$, one has
\begin{align*}
\|S(\theta)-S(\thetil)\|_U &= \|D(\theta)^{-1}f-D(\thetil)^{-1}f\|_U\leq \|D(\theta)^{-1}-D(\thetil)^{-1}\|_{W^*\to U}\|f\|_{W^*}\\
&\leq C(\|\thetil\|_{X},\rho,\|f\|_{W^*})\|\theta-\thetil\|_{X}
\end{align*}
with some constant $C$ depending on $L_{inv}(\|\thetil\|_{X},\rho)$ and the fixed source $f$.
\end{proof}

Before moving forwards, we remark that the assumption on the locally bounded inverse \eqref{ass-bound-Dinv} is very much feasible for linear PDEs.  We demonstrate this for the $a$-problem in Proposition \ref{prop:abc} with $c=b=0$, i.e.~for the equation $D(a):=-\Div(a\nabla u) = f$.
Testing this PDE with $u\in U=H^1_0(\dom)=W$ yields
\begin{align*}
&\lan f,u\ran_{W^*,W}  \geq \underline{a}\|u\|^2_U \implies \|u\|_U\leq \frac{1}{\underline{a}}\|f\|_{W^*}
\implies \|D(\theta)^{-1}\|_{W^*\to U}\leq \frac{1}{\underline{a}}=:\pi = \text{const}.
\end{align*}
This step is in fact the coercivity estimate, which is the key to unique existence of PDE solutions, via e.g.~the Lax-Milgram theorem.

We additionally highlight that the passive imaging problem is ill-posed when measurements cannot capture smoothness of the PDE solutions, a very common situation in practice. 

\begin{corollary}[Compactness and ill-posedness]\label{cor:compactness} Let Assumption \ref{ass:bound-Dinv} hold. If the state space is compactly embedded into the data space $U\compt L^2(\dom)$, then the parameter-to-covariance map $F:\Dc(\subseteq X)\to \Yc$ is compact. The inverse problem is then ill-posed.
\end{corollary}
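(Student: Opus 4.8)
The plan is to prove compactness of $F$ by exhibiting it as a composition in which a compact factor appears, then invoke the standard characterization of ill-posedness for nonlinear operators via compactness (see Appendix \ref{appendix}). Recall the composition structure $F=\Cc\circ\Sv$ from \eqref{F-inS}, where $\Sv$ maps into $U^J$ and $\Cc$ assembles the sample covariance into $\Yc=L^2(\doms)$. First I would show that $\Sv:\Dc\to U^J$ is (locally) continuous: this is immediate from Lemma \ref{lem:lipschitz}, which gives local Lipschitz continuity of each $S=S_{f_j}$ under Assumption \ref{ass:bound-Dinv}, hence of the vectorial map $\Sv$. The key is that the output lands in the \emph{strong} space $U^J$, carrying the full Sobolev regularity of the PDE solutions.

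Next I would exploit the compact embedding $U\compt L^2(\dom)$. The idea is to factor the covariance assembly through this embedding in each tensor slot. Writing $E:U\compt L^2(\dom)$ for the compact embedding, the sample covariance $\Cc(u)=\frac{1}{J}\sum_j u_j\overline{u_j}$ viewed as an element of $L^2(\doms)$ is built from outer products $u_j\otimes\overline{u_j}$. The bilinearity of the outer product, together with compactness of $E$ in each factor, is what transfers compactness: a bounded sequence $\{u_j^{(n)}\}_n\subset U$ admits a subsequence converging \emph{strongly} in $L^2(\dom)$, and since $(p,q)\mapsto p\overline{q}$ is continuous from $L^2(\dom)\times L^2(\dom)$ to $L^2(\doms)$ (indeed $\|p\overline{q}\|_{L^2(\doms)}=\|p\|_{L^2}\|q\|_{L^2}$), the corresponding outer products converge strongly in $\Yc$. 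Summing over the finitely many $j=1\ldots J$ preserves this. Thus the map $U^J\ni(u_1,\ldots,u_J)\mapsto\frac{1}{J}\sum_j u_j\overline{u_j}\in\Yc$ sends bounded sets to relatively compact sets.

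Combining these, $F=\Cc\circ\Sv$ sends a bounded subset of $\Dc$ first, via the continuous $\Sv$, to a bounded subset of $U^J$, and then, via the covariance-assembly map, to a relatively compact subset of $\Yc$; hence $F$ maps bounded sets to relatively compact sets, i.e.~$F$ is compact. For the ill-posedness conclusion I would appeal to the standard result, recalled in Appendix \ref{appendix}, that a compact nonlinear operator between infinite-dimensional spaces cannot have a (locally) continuous inverse, so the inverse problem $F(\theta)=y$ is ill-posed in the sense that solutions do not depend stably on the data.

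The main obstacle I anticipate is making the compactness transfer through the nonlinear, bilinear covariance map fully rigorous rather than merely plausible: one must argue via sequential compactness on bounded sets, extracting a common subsequence along which all $J$ strands $u_j^{(n)}$ converge strongly in $L^2(\dom)$, and then control the outer products uniformly. The continuity estimate $\|p\overline{q}-\tilde p\overline{\tilde q}\|_{L^2(\doms)}\le\|p-\tilde p\|_{L^2}\|q\|_{L^2}+\|\tilde p\|_{L^2}\|q-\tilde q\|_{L^2}$ handles this, but care is needed that the convergence is strong (granted by the \emph{compact} embedding) rather than merely weak, since weak convergence of $u_j^{(n)}$ would not suffice for convergence of the products. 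A secondary, more routine point is confirming boundedness of $\Sv$ on the relevant bounded sets, which follows from the local Lipschitz bound of Lemma \ref{lem:lipschitz}.
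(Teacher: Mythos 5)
Your proof is correct and follows essentially the same route as the paper: factor $F$ as the locally Lipschitz (hence bounded) map $\Sv$ from Lemma \ref{lem:lipschitz}, followed by the compact embedding $U\compt L^2(\dom)$, followed by the continuous covariance assembly into $L^2(\doms)$, and then invoke Theorem \ref{theo:compact-illpose} for ill-posedness. The only difference is that you spell out the sequential-compactness transfer through the bilinear outer product, which the paper compresses into the single phrase \enquote{bounded observation map}.
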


\begin{proof}

The forward map $F=\cov\circ E_{U\to L^2}\circ\Sv$ is compact, as it is a composition of the bounded observation map $L^2(\dom)\ni u\mapsto \cov(u,u)\in L^2(\doms)$, the locally Lipschitz continuous, thus bounded, parameter-to-state map $S:\Dc\ni\theta\to u\in U$ in Lemma \ref{lem:lipschitz} together with finiteness of the vector $\Sv$, and the compact embedding $E:U\to L^2(\dom)$.

As the forward map $F$ is compact, the inverse problem is ill-posed; see Theorem \ref{theo:compact-illpose} in the Appendix.
\end{proof}

The second auxiliary result estimates the $\Yc$-norm of the cross term $\cov(v-u,u)$, which will soon appear in the main proof.

\begin{lemma}[Boundedness of cross term] \label{lem:bound-mixed-term}
Given Assumption \ref{ass:bound-Dinv}, denote $\Pi_f:=\cov(f,f)$.\\
Then there exists a constant $C(\|\thetil\|_X,\rho, \|\Pi_f\|_{\HS(W,W^*)})$ such that  
\begin{align}\label{bound-mixed-term}
\|\cov(v-u,u)\|_{L^2(\doms)}\leq C(\|\thetil\|_X,\rho, \|\Pi_f\|_{\HS(W,W^*)})\|\theta-\thetil\|_{X}
\end{align}
holds for $v:=\Sv(\thetil)$, $u:=\Sv(\theta)$ with $\theta\in\Bc^X_\rho(\thetil)\subseteq\Dc$.
\end{lemma}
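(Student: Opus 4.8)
The plan is to exploit the operator representation of the covariance established in \eqref{F_incov}, combined with the ideal property of Hilbert-Schmidt operators. \textbf{First}, I would rewrite the cross term as a single triple operator product. Since $v_j-u_j=(D(\thetil)^{-1}-D(\theta)^{-1})f_j$ while $u_j=D(\theta)^{-1}f_j$, the same bilinear computation that produced \eqref{F_incov} (tracking the conjugate-linear pairing inherited from the complex-valued states) yields the identity
\begin{align*}
\cov(v-u,u) = \(D(\thetil)^{-1}-D(\theta)^{-1}\)\,\Pi_f\,\(D(\theta)^{-1}\)^\star,
\end{align*}
with $\Pi_f=\cov(f,f)\in\HS(W,W^*)$. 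This reduces the lemma to estimating the Hilbert-Schmidt norm of a product of three operators.

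\textbf{Second}, I would convert the $L^2(\doms)$ norm into a Hilbert-Schmidt norm using the isometry $\|\cdot\|_{L^2(\doms)}=\|\cdot\|_{\HS}$ recalled in Section \ref{sec:ip}, and then transport the estimate through the Gelfand triple $U\embed L^2(\dom)\embed U^*$. Reading the product above as a map $U^*\to U$ and pre/post-composing with the embeddings $L^2\embed U^*$ and $U\embed L^2$, the ideal property (bounded $\times$ Hilbert-Schmidt $\times$ bounded, with $\Pi_f$ as the Hilbert-Schmidt factor) gives
\begin{align*}
\|\cov(v-u,u)\|_{L^2(\doms)} \leq C_{U\to L^2}\,C_{L^2\to U^*}\,\|D(\thetil)^{-1}-D(\theta)^{-1}\|_{W^*\to U}\,\|\Pi_f\|_{\HS(W,W^*)}\,\|(D(\theta)^{-1})^\star\|_{U^*\to W}.
\end{align*}

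\textbf{Third}, I would bound the three surviving factors. The difference of inverses is handled directly by the local Lipschitz estimate \eqref{Lip-Dinv} of Lemma \ref{lem:lipschitz}, which supplies the factor $L_{inv}(\|\thetil\|_X,\rho)\|\theta-\thetil\|_X$; this is precisely where the linear dependence on $\|\theta-\thetil\|_X$ enters. For the adjoint inverse I use the norm identity $\|(D(\theta)^{-1})^\star\|_{U^*\to W}=\|D(\theta)^{-1}\|_{W^*\to U}$, which is controlled by Assumption \ref{ass:bound-Dinv} together with $\|\theta\|_X\leq\|\thetil\|_X+\rho$ on the ball $\Bc^X_\rho(\thetil)$, giving $\pi(\|\thetil\|_X+\rho)$. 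Collecting all constants yields \eqref{bound-mixed-term}.

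\textbf{Main obstacle.} The delicate step is the space bookkeeping in the second paragraph: one must verify that the triple product genuinely maps $U^*\to U$, that the middle factor is the Hilbert-Schmidt operator $\Pi_f\in\HS(W,W^*)$ while the outer two factors are bounded, and that the embedding constants $C_{U\to L^2}$, $C_{L^2\to U^*}$ correctly convert the $\HS(U^*,U)$ norm into the $\HS(L^2,L^2)\cong L^2(\doms)$ norm. Establishing the cross-term operator identity and confirming the correct placement of operator versus Hilbert-Schmidt norms in the ideal inequality is the technical heart of the argument; once these are in place, the remaining estimates are immediate applications of Lemma \ref{lem:lipschitz} and Assumption \ref{ass:bound-Dinv}.
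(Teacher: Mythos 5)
Your proposal is correct and follows essentially the same route as the paper's proof: factor the cross covariance as $(D(\thetil)^{-1}-D(\theta)^{-1})\,\Pi_f\,(D(\theta)^{-1})^\star$ via \eqref{F_incov}, pass from $L^2(\doms)$ to the Hilbert--Schmidt norm and through the embedding $\HS(U^*,U)\embed\HS(L^2)$, apply the ideal property with $\Pi_f$ as the Hilbert--Schmidt factor, and bound the outer factors by the Lipschitz estimate \eqref{Lip-Dinv} and Assumption \ref{ass:bound-Dinv} together with the duality identity $\|(D(\theta)^{-1})^\star\|_{U^*\to W}=\|D(\theta)^{-1}\|_{W^*\to U}$. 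The space bookkeeping you flag as the main obstacle is handled in the paper exactly as you describe, so no gap remains.
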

\begin{proof}
In the following, we employ all the following properties: the covariance representation \eqref{F_incov}, the continuous embedding $\HS(U^*,U)\embed \HS(L^2)$ stemming from $U\embed L^2(\dom)$, the fact that the composition of a Hilbert-Schmidt operator $A\in \HS(Y,Z)$ and a bounded operator $B\in\Lc(X,Y)$ is again Hilbert-Schmidt \cite{MeiseVogt92} with $\|AB\|_{\HS(X,Z)}\leq\|A\|_{\HS(Y,Z)}\|B\|_{X\to Y}$, the duality equivalence $\|(D(\theta)^{-1})^\star\|_{U^*\to W}=\|D(\theta)^{-1}\|_{W^*\to U}$,  
and the Lipschitz properties derived in \eqref{Lip-D}-\eqref{Lip-Dinv}. Consequently,
\begin{align*}
&\|\cov(v-u,u)\|_{L^2(\doms)}= \|\cov(v-u,u)\|_{\HS(L^2)}\\
&\quad= C_{\HS(U^*,U)\to\HS(L^2)}\|(D(\thetil)^{-1}-D(\theta)^{-1})\cov(f,f)(D(\theta)^{-1})^\star\|_{HS(U^*,U)}\\
&\quad\leq C_{\HS(U^*,U)\to\HS(L^2)}\|D(\thetil)^{-1}-D(\theta)^{-1})\|_{W^*\to U}\|\cov(f,f)\|_{\HS(W,W^*)}\|(D(\theta)^{-1})^\star\|_{U^*\to W}\\
&\quad\leq C_{\HS(U^*,U)\to\HS(L^2)}\|\cov(f,f)\|_{\HS(W,W^*)}\pi(\|\thetil\|_X+\rho) L_{inv}(\|\thetil\|_{X},\rho)\|\theta-\thetil\|_{X}\\
&\quad=: C(\|\thetil\|_X,\rho, \|\Pi_f\|_{\HS(W,W^*)})\|\theta-\thetil\|_{X}
\end{align*}
for any given fixed $\cov(f,f)\in \HS(W,W^*)$.
\end{proof}

As alluded to in the beginning of this section, the third auxiliary step involves lifting the PDE residual space from $(L^2(\dom)\subseteq) W^*$ to $L^2(\dom)$; this leads to smoother PDE solutions, provided that the parameters are smoother.

\begin{assumption}[Lifted regularity]\label{ass:space-lift}\,
\begin{enumerate}[label=\roman*)]
\item \label{ass-space-lift-1} 
Consider Hilbert spaces $\Uhat(\subseteq U)$ and $\Xhat(\embed X)$ with the Gelfand triple property and let
\begin{equation}\label{ass-space-lift}
\begin{split}
D(\theta)\in \Lc(\Uhat,L^2(\dom)),%\quad B(u)\in\Lc(\Xhat,L^2(\dom)), 
\quad B\in\Lc(\Uhat,\Lc(\Xhat,L^2(\dom))) \quad \text{for all }\theta\in\Xhat\\
D(\theta)^{-1}\in \Lc(L^2(\dom),\Uhat)\quad \text{for all }\theta\in \Dc\cap \Xhat=:\widehat{\Dc}.
\end{split}
\end{equation}

\item
Assume there is a functional $\pihat:\R^+\to\R^+$ that maps any bounded set to a bounded set and satisfies, for any $\theta\in \widehat{\Dc}$,
\begin{align}\label{ass-bound-Dinv-lift}
\|D(\theta)^{-1}\|_{L^2\to \Uhat}\leq \pihat(\|\theta\|_\Xhat).
\end{align}
\end{enumerate}
\end{assumption}

One example of the lifted regularity setting for the $a,b,c$-problem in Section \ref{sec:abc-problems and discussion} is
\begin{equation*}\label{abc-space-lift}
\begin{split}
&\Uhat:=H_0^2(\dom)\cap H_0^1(\dom), \quad \What^*:=L^2(\dom),\quad \Xhat_a:= H^2(\dom), \quad \Xhat_b:=H^1(\dom), \quad \Xhat_c:=H^1(\dom)
\end{split}
\end{equation*}
with some straightforward energy estimation.
Comparing to the framework \eqref{abc-space}, we clearly see the shifting/lifting regularity of the state space $U$ when the source space $\What^*$ and parameter space $\Xhat_c$ are smoother. Shifting regularity is a natural phenomenon in linear PDEs, as discussed in Discussion \ref{dis-general-space}. 

Under the new setting in Assumption \ref{ass:space-lift}, we also obtain Lipschitz properties similar to Lemma \ref{lem:lipschitz}; we use $\widehat{(\cdot)}$ to denote the altered Lipschitz constants. We choose the norm on $\Xhat$ such that $\|\cdot\|_X\leq\|\cdot\|_\Xhat$, thus naturally $\Bc_\rho^\Xhat(\thetil)\subseteq \Bc_\rho^X(\thetil)$, without the need to rescale or introduce the new radius $\hat{\rho}=\rho/C_{\Xhat\to X}$.

\begin{lemma}[Lifted Lipschitz properties] \label{lem:lipschitz-new}
We have the following Lipschitz properties on the new function spaces:
\begin{itemize}
\item[i)] Let Assumption \ref{ass:space-lift}-\ref{ass-space-lift-1} hold.
Then $D$ is Lipschitz continuous, and 
\begin{align}\label{Lip-D-lift}
\|D(\theta)-D(\thetil)\|_{\Uhat\to L^2}\leq \widehat{L}_D\|\theta-\thetil\|_\Xhat \quad\text{with }\widehat{L}_D:=\|B\|_{\Uhat\to \Lc(\Xhat,L^2)}
\end{align}
holds for all $\theta,\thetil\in \Xhat$.
\item[ii)] Let Assumption \ref{ass:space-lift} hold. For all $\theta\in \Bc^\Xhat_\rho(\thetil)\subseteq \widehat{\Dc}$, there exists a constant $\Lhat_{inv}(\|\thetil\|_\Xhat,\rho)$ satisfying
\begin{align}\label{Lip-Dinv-lift}
\|D(\theta)^{-1}&-D(\thetil)^{-1}\|_{L^2\to \Uhat} \leq \Lhat_{inv}(\|\thetil\|_{\Xhat},\rho)\|\theta-\thetil\|_{\Xhat},
\end{align}
that is, the inverse $D(\cdot)^{-1}$ is locally Lipschitz continuous near $\thetil$.
\end{itemize}
\end{lemma}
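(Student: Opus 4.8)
The plan is to mirror the proof of Lemma \ref{lem:lipschitz} almost verbatim, simply transplanting the argument from the triple $(U,W^*,X)$ to the lifted triple $(\Uhat,L^2(\dom),\Xhat)$ and invoking the lifted boundedness hypothesis \eqref{ass-bound-Dinv-lift} in place of \eqref{ass-bound-Dinv}. The crucial structural fact I would use is that Assumption \ref{ass:space-lift}-\ref{ass-space-lift-1} guarantees $D$ retains its affine-linear dependence on $\theta$ with representing operator $B\in\Lc(\Uhat,\Lc(\Xhat,L^2))$, so the entire algebraic skeleton of the original proof remains intact on the new spaces.

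For part i), I would write $D(\theta)-D(\thetil)=B(\cdot)(\theta-\thetil)$ by affine linearity and estimate
\begin{align*}
\|D(\theta)-D(\thetil)\|_{\Uhat\to L^2}
=\sup_{\|u\|_{\Uhat}\leq 1}\|B(u)(\theta-\thetil)\|_{L^2}
\leq \|B\|_{\Uhat\to\Lc(\Xhat,L^2)}\,\|\theta-\thetil\|_{\Xhat},
\end{align*}
which yields \eqref{Lip-D-lift} with the stated Lipschitz constant $\widehat{L}_D$. This step is immediate and requires only that $B$ be bounded into the lifted space, exactly as furnished by Assumption \ref{ass:space-lift}-\ref{ass-space-lift-1}.

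For part ii), the engine is the second resolvent identity. Setting $D(\theta)u=f$, $D(\thetil)v=f$ and $w:=u-v$, one obtains $D(\theta)w=-(D(\theta)-D(\thetil))v$, i.e.
\begin{align*}
D(\theta)^{-1}-D(\thetil)^{-1}=-D(\theta)^{-1}\bigl(D(\theta)-D(\thetil)\bigr)D(\thetil)^{-1}.
\end{align*}
Taking operator norms from $L^2$ to $\Uhat$ and inserting the two outer factors via \eqref{ass-bound-Dinv-lift} and the middle factor via \eqref{Lip-D-lift} gives
\begin{align*}
\|D(\theta)^{-1}-D(\thetil)^{-1}\|_{L^2\to\Uhat}
\leq \widehat{L}_D\,\pihat(\|\thetil\|_{\Xhat}+\rho)\,\pihat(\|\thetil\|_{\Xhat})\,\|\theta-\thetil\|_{\Xhat},
\end{align*}
where $\theta\in\Bc^\Xhat_\rho(\thetil)$ ensures $\|\theta\|_{\Xhat}\leq\|\thetil\|_{\Xhat}+\rho$ so that the locally-bounded-inverse functional $\pihat$ can be applied to $\theta$. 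Defining $\Lhat_{inv}(\|\thetil\|_{\Xhat},\rho)$ as the resulting prefactor establishes \eqref{Lip-Dinv-lift}.

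There is no genuine obstacle here; the argument is routine given the preceding lemmas. The only points worth flagging are bookkeeping rather than mathematical difficulty: one must verify that the ball inclusion $\Bc^\Xhat_\rho(\thetil)\subseteq\widehat{\Dc}$ keeps all arguments inside the domain where the lifted inverse is bounded (secured by the norm convention $\|\cdot\|_X\leq\|\cdot\|_\Xhat$ stated before the lemma, which gives $\Bc^\Xhat_\rho(\thetil)\subseteq\Bc^X_\rho(\thetil)$ without rescaling the radius), and that the boundedness functional $\pihat$ from \eqref{ass-bound-Dinv-lift} indeed maps bounded sets to bounded sets so that $\Lhat_{inv}$ is finite on the relevant ball.
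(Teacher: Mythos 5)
Your proposal is correct and follows exactly the route the paper takes: the paper's proof of this lemma is literally the one-line remark that the argument of Lemma \ref{lem:lipschitz} carries over verbatim with $L^2(\dom)$, $\Uhat$, $\Xhat$, $\widehat{\Dc}$ in place of $W^*$, $U$, $X$, $\Dc$, which is precisely the transplantation you spell out (affine linearity of $D$ in $\theta$ for part i), the resolvent identity plus the lifted bounded-inverse assumption \eqref{ass-bound-Dinv-lift} for part ii)). No gaps; your explicit write-out is just a more detailed version of the same proof.
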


\begin{proof}
The proof is the same as Lemma \ref{lem:lipschitz}, with $L^2(\dom)$ in place of $W^*$, with $\Uhat$, $\Xhat$ in place of $U$, $X$ and with $\widehat{\Dc}$ in place of $\Dc$.
\end{proof}

\subsection{The TTC-like linearization error}\label{sec:linearize-error}

With all the auxiliary steps prepared, we are now ready for the main result.

\begin{theorem}[Linearization error]\label{theo:linearization}
Let Assumption \ref{ass:bound-Dinv} and \ref{ass:space-lift} hold. The first order linearization error $E_{\mathrm{lin}}$ of the forward map $F=\cov\circ\Sv$ at a point $\thetil\in\widehat{\Dc}$ is then bounded by
\begin{equation}\label{lin-error-final}
\begin{split}
E_{\mathrm{lin}}\leq \Mhat\|\theta-\thetil\|_\Xhat \|F(\theta)-F(\thetil)\|_\Yc + \Nhat\|\theta-\thetil\|_X\|\theta-\thetil\|_\Xhat
\end{split}
\end{equation}
for all $\theta\in \Bc^\Xhat_\rho(\thetil)\subseteq\widehat{\Dc}$, with some positive constants \[\Mhat=\Mhat(\|\thetil\|_\Xhat,\rho), \qquad \Nhat= \Nhat(\|\thetil\|_X,\rho, \|\Pi_f\|_{\HS(W,W^*)}).\]
\end{theorem}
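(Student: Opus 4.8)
The plan is to expand the linearization error $E_{\mathrm{lin}} = \|F(\theta) - F(\thetil) - F'(\theta)(\theta - \thetil)\|_\Yc$ directly in terms of the states and the covariance operator, then group the resulting terms into a ``cross-term'' piece (controlled via Lemma \ref{lem:bound-mixed-term}, yielding the image-difference factor) and a pure quadratic piece (controlled via the lifted Lipschitz estimates of Lemma \ref{lem:lipschitz-new}). Writing $u := \Sv(\theta)$, $v := \Sv(\thetil)$ and $\phi := \Sv'(\theta)(\theta - \thetil)$ for the linearized state, I would first use the bilinear structure $\cov(a,b)$ to expand
\begin{align*}
F(\theta) - F(\thetil) &= \cov(u,u) - \cov(v,v),\\
F'(\theta)(\theta - \thetil) &= \cov(u,\phi) + \cov(\phi,u),
\end{align*}
exactly as in \eqref{linearization}. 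The algebraic heart of the argument is to rewrite the difference $u-v-\phi$ and the various covariance products so that $E_{\mathrm{lin}}$ decomposes into (i) terms of the form $\cov(v-u, u)$ or $\cov(u, v-u)$ multiplied against factors that can be absorbed into $\|F(\theta)-F(\thetil)\|$, and (ii) genuinely second-order terms such as $\cov(w,w)$ where $w$ collects the $O(\|\theta-\thetil\|^2)$ discrepancy between the true state difference $u-v$ and its linearization $\phi$.

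\textbf{Estimating the two pieces.}
For the quadratic piece, the key is that $w := (u-v) + \phi = S(\theta) - S(\thetil) + S'(\theta)(\theta-\thetil)$ is a second-order remainder of the (smooth, locally Lipschitz-differentiable) parameter-to-state map; I would bound $\|w\|$ by solving the PDE it satisfies and invoking the lifted bounded-inverse assumption \eqref{ass-bound-Dinv-lift} together with \eqref{Lip-Dinv-lift}, which is precisely why Assumption \ref{ass:space-lift} is needed: squaring a state in the covariance operator requires the state to live in a space that embeds into $L^2$ well enough for the Hilbert--Schmidt norm $\|\cov(w,w)\|_{\HS(L^2)} \le \|w\|^2_{L^2}$-type bound to close, and the lifted regularity supplies the extra smoothness. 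This produces the $\Nhat\|\theta-\thetil\|_X\|\theta-\thetil\|_\Xhat$ term, with the two different norms reflecting that one factor of $w$ is estimated in the base space and one in the lifted space. For the cross piece, I would factor out a $\cov(v-u,u)$ (or its adjoint) and recognize the complementary factor as comparable to $\|F(\theta)-F(\thetil)\|_\Yc = \|\cov(u,u)-\cov(v,v)\|_\Yc$; Lemma \ref{lem:bound-mixed-term} then converts the remaining $\cov(v-u,u)$ into the Lipschitz factor $\|\theta-\thetil\|$, giving the $\Mhat\|\theta-\thetil\|_\Xhat\|F(\theta)-F(\thetil)\|_\Yc$ term.

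\textbf{The main obstacle.}
The delicate part is the bookkeeping that ensures the cross-term factor really collapses to $\|F(\theta)-F(\thetil)\|$ rather than to $\|u-v\|$ or $\|\theta-\thetil\|$; a naive triangle-inequality split sends every term into the quadratic bin and destroys the TCC-like first term. I would therefore carefully use the identity $\cov(u,u)-\cov(v,v) = \cov(u-v,u) + \cov(v,u-v)$ and its symmetrizations to exhibit the image difference \emph{explicitly} as a common factor, exploiting Hermitian symmetry on $\Ycov$ (Remark \ref{rem:cov-structure}) and the bilinearity of $\cov$, before applying any norm estimate. Managing the interplay of the two norms $\|\cdot\|_X$ and $\|\cdot\|_\Xhat$ so that each constant $\Mhat$, $\Nhat$ depends only on the indicated quantities, and verifying well-definedness in the lifted Hilbert--Schmidt/Bochner setting, is where the technical care concentrates.
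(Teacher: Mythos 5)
Your overall architecture --- expanding $F(\theta)-F(\thetil)$ and $F'(\theta)(\theta-\thetil)$ via the bilinearity of $\cov$, sending the cross-covariance $\cov(v-u,u)$ through Lemma \ref{lem:bound-mixed-term}, and invoking the lifted regularity of Assumption \ref{ass:space-lift} to close the estimate in $L^2$ --- matches the paper's intent, and you correctly flag the central danger that a naive split dumps every term into the quadratic bin. But your proposed remedy does not actually escape that bin, and this is a genuine gap. Working purely algebraically with $\cov(u,u)-\cov(v,v)=\cov(u-v,u)+\cov(v,u-v)$ and the remainder $r:=u-v-\phi$ (note that your $w=(u-v)+\phi$ has the wrong sign, and that $\cov(w,w)$ with $w=O(\|\theta-\thetil\|^2)$ would be \emph{fourth} order, not second), one finds that the error kernel equals $\cov(r,u)+\cov(v,r)-\cov(u-v,\phi)$: every summand is $O(\|\theta-\thetil\|^2)$ and none contains $\|F(\theta)-F(\thetil)\|_\Yc$ as a factor. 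There is likewise no term in $E_{\mathrm{lin}}$ that is literally a \emph{product} of $\cov(v-u,u)$ with \enquote{something comparable to $\|F(\theta)-F(\thetil)\|_\Yc$}, so the mechanism you describe for the first term of \eqref{lin-error-final} has no concrete realization; moreover you have the roles reversed, since in the paper Lemma \ref{lem:bound-mixed-term} produces the \emph{quadratic} term $\Nhat\|\theta-\thetil\|_X\|\theta-\thetil\|_\Xhat$, not the TCC-like one.

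The missing idea is a PDE-level, rather than kernel-level, decomposition. The paper splits $\cov(u)-\cov(v)=w_A+\overline{w_B}$ by inserting the mixed term $v_j(x)\overline{u_j(x')}$, splits the derivative as $q_A+\overline{q_B}$, and observes that each piece solves an extended PDE on $\doms$ in one variable, e.g.\ $D_2(\theta)w_B=-(D_2(\theta)-D_2(\thetil))\overline{\cov(v)}$ and $D_2(\theta)q_B=-(D_2(\theta)-D_2(\thetil))\overline{\cov(u)}$. Hence $q_B-w_B$ solves a PDE whose \emph{source} is $-(D_2(\theta)-D_2(\thetil))(\overline{\cov(u)}-\overline{\cov(v)})$; applying $D(\theta)^{-1}$ in the lifted setting together with the Lipschitz bound \eqref{Lip-D-lift} yields $\Mhat\|\theta-\thetil\|_\Xhat\|F(\theta)-F(\thetil)\|_\Yc$ directly --- the image difference enters as the argument of the operator difference inside a PDE source, not as a multiplicative factor isolated after the fact. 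The remaining piece $q_A-w_A$ solves the analogous PDE with source $-(D_1(\theta)-D_1(\thetil))\cov(u-v,u)$, and only there does Lemma \ref{lem:bound-mixed-term} enter, converting the mixed covariance into $\|\theta-\thetil\|_X$ and producing the quadratic term. Without this source-level appearance of $\cov(u)-\cov(v)$, your plan proves at best a Taylor-type bound $E_{\mathrm{lin}}\le C\|\theta-\thetil\|^2$ and misses the first, TCC-like, term of \eqref{lin-error-final}, which is the substance of the theorem.
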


\begin{proof}
The proof is carried out in three steps.

\textbf{Step 1 (Covariance decomposition).} Let $h:=\theta-\thetil$ and $u:=\Sv(\theta)$, $v:=\Sv(\thetil)$. The linearization error \eqref{Elin-general} for $F=\cov\circ\Sv$ now reads as
\begin{align}\label{Elin}
E_{\text{lin}}=\|\cov(u)-\cov(v)-\big[\cov(\phi,u)+\cov(u,\phi)\big]\|_{L^2(\doms)}.
\end{align} 
One recalls that the  linearization $F'(\theta)h$  can be expressed  via the linearized state $\phi=\Sv'(\theta)h$ as in \eqref{linearization} of Lemma \ref{lem:linearized-state}. Here, we simplify notation and write $\cov(u):=\cov(u,u)$, leaving the original (full) notation to highlight the cross-correlation between two different terms such as $\cov(\phi,u)$. In what follows, we shall separately study two the sub-terms of \eqref{Elin}: the covariance difference $W$ and the covariance linearization $Q$, given as
\[
W:=\cov(u)-\cov(v),\quad Q:=\cov(u,\phi)+\cov(\phi,u),
\] 
and combine them together at the end.

Firstly, for $W$, we insert the mixed term $v_j(x)\overline{u_j(x')}$ and write
\begin{align*}
W(x,x')=\sumj \big[u_j(x)-v_j(x)\big]\overline{u_j(x')} + \big[\overline{u_j(x')}-\overline{v_j(x')}\big]v_j(x)=:w_A(x,x')+\overline{w_B(x,x')}.
\end{align*}
Since $D(\theta)u_j=f_j$ and $D(\thetil)v_j=f_j$, the difference $u_j-v_j$ solves 
\begin{align*}
D(\theta)(u_j-v_j)= -\(D(\theta)-D(\thetil)\)v_j,
\end{align*}
noting that in the difference equation, the source  $f_j$ is no longer present. Multiplying two sides of this equation by $\overline{u_j(x')}$  and summing up all $j=1\ldots J$, we have that $w_A$ solves the PDE
\begin{align}\label{wa}
 D_1(\theta)w_A = -\(D_1(\theta)-D_1(\thetil)\)\cov(v,u)
\end{align}
at a.e.~$x'\in\dom$, the subscript again indicating that the differential operator $D$ acts on $w_A(\cdot,x')$. \\
Similarly, the equation for $w_B$  takes the form
\begin{align}\label{wb}
D_2(\theta)w_B = -\(D_2(\theta)-D_2(\thetil)\)\overline{\cov(v)}
\end{align}
for a.e.~$x\in\dom$. The key difference in the equations for $w_A$ and $w_B$ is their source terms; in particular, \eqref{wa} has the mixed term $\cov(v,u)$ on the right hand side. 

\textbf{Step 2 (Derivative decomposition).} Next, consider the linearization $Q$ on the form  
\[Q= \sumj  \phi_j(x) \overline{u_j(x')}+u_j(x)\overline{\phi_j(x')}=:q_A(x,x')+\overline{q_B(x,x')}.\] 
One recalls  Lemma \ref{lem:linearized-state} and  \eqref{B} for  $D(\theta)\phi_j=-B(u_j)h=-(D(\theta)-D(\thetil))u_j$. Multiplying both sides of this identity by $\overline{u_j(x')}$ and summing up all $j=1\ldots J$ yields the equation
\begin{align}\label{qa}
D_1(\theta)q_A = -\(D_1(\theta)-D_1(\thetil)\)\cov(u)
\end{align}
for $q_A$, and in the similar fashion, one gets the equation
\begin{align}\label{qb}
D_2(\theta)q_B =  -\(D_2(\theta)-D_2(\thetil)\)\overline{\cov(u)}
\end{align}
for $q_B$. With this step, we have represented the linearization in a form that does not contain the mixed covariance.

\textbf{Step 3 (Combining and estimating).} We next combine all elements of $E_{\text{lin}}$, that is, $w_A$, $w_B$, $q_A$, $q_B$ in \eqref{wa}-\eqref{qb}, in the following order:
\begin{align}\label{lin-error-decomp}
E_{\text{lin}}\leq \|\overline{q_B}-\overline{w_B}\|_{L^2(\doms)}+\|q_A-w_A\|_{L^2(\doms)},
\end{align}
noting that the complex conjugate does not affect the $L^2$-norm.

Firstly, we consider the $B$-terms. Both $w_B$ and $q_B$ in \eqref{wb}, \eqref{qb} satisfy similar PDEs,  differing only by the sources. In the following, we shall respectively employ duality, the isomorphism $I_\Uhat:\Uhat^*\to\Uhat$, the Hölder inequality and continuity of the embeddings:
\begin{align*}
& \|q_B-w_B\|_{L^2(\doms)}^2 = \|q_B-w_B\|_{L^2(L^2)}^2  
=\int_\dom \Big(\sup_{\|\varphi\|_{L^2\leq1}}\lan q_B(x,\cdot)-w_B(x,\cdot),\varphi\ran \Big)^2\,dx\\
\leq & \int_\dom \sup_{\|\varphi\|_{L^2\leq1}}\left|\lan D_2(\theta)^{-1}(D_2(\theta)-D_2(\thetil))(\overline{\cov(u)}-\overline{\cov(v)})(x,\cdot),\varphi\ran \right|^2\,dx\\
= & \int_\dom \sup_{\|\varphi\|_{L^2\leq1}}\left|\lan (\overline{\cov(u)}-\overline{\cov(v)})(x,\cdot),(D_2(\theta)-D_2(\thetil))^\star (D_2(\theta)^{-1})^\star\varphi\ran \right|^2\,dx\\
\leq & \int_\dom \| (\cov(u)-\cov(v))(x,\cdot)\|^2_{ L^2}\,dx  \\
& \sup_{\|\varphi\|_{\Uhat^*\leq C_{L^2\to\Uhat^*}}}\left[C_{\Uhat\to L^2}\|I_\Uhat\|_{\Uhat^*\to \Uhat}\|D_2(\theta)-D(\thetil))^\star\|_{L^2\to \Uhat^*} \|D(\theta)^{-1})^\star\|_{\Uhat^*\to L^2}\|\varphi\|_{\Uhat^*}\right]^2\\
\leq & \left[C_{L^2\to\Uhat^*}C_{\Uhat\to L^2}\|I_\Uhat\| \|D(\theta)^{-1}\|_{L^2\to \Uhat}\|D(\theta)-D(\thetil)\|_{\Uhat\to L^2}\|\cov(u)-\cov(v)\|_{L^2(\doms)}\right]^2.
\end{align*}
The above computations highlight the role of lifted regularity in Assumption \ref{ass:space-lift}, as one needs to arrive at the $L^2$-norm, in accordance with the data space, on both sides of the estimate \eqref{lin-error-1}.

With  Lipschitz continuity of $D$ in \eqref{Lip-D-lift} and the bounded inverse assumption \eqref{ass-bound-Dinv-lift}, one achieves
\begin{align}\label{lin-error-1}
\|q_B-w_B\|_{L^2(\doms)}&\leq  C_{L^2\to\Uhat^*}C_{\Uhat\to L^2}\|I_\Uhat\| \pihat(\|\thetil\|_\Xhat+\rho)\widehat{L}_D\|\theta-\thetil\|_\Xhat  \|\cov(u)-\cov(v)\|_{L^2(\doms)} \nonumber\\
&=: \Mhat(\|\thetil\|_\Xhat,\rho)\|\theta-\thetil\|_\Xhat \|\cov(u)-\cov(v)\|_{L^2(\doms)}.
\end{align}

Now, for the $A$-terms $w_A$ in \eqref{wa} and $q_A$ in \eqref{qa}, we proceed in the same fashion as the $B$-terms in the first estimate below. However, since the source term of $w_A$ contains the mixed covariance $\cov(v,u)$, we need further evaluation, in particular, the bound \eqref{bound-mixed-term} derived in Lemma \ref{lem:bound-mixed-term} in the second estimate below. More precisely, one obtains
\begin{align}\label{lin-error-2}
\|q_A-w_A\|_{L^2(\doms)}&\leq \Mhat(\|\thetil\|_\Xhat,\rho)\|\theta-\thetil\|_\Xhat \|\cov(u,u)-\cov(v,u)\|_{L^2(\doms)}\hspace{3cm} \nonumber\\
&\leq \Mhat(\|\thetil\|_\Xhat,\rho)\|\theta-\thetil\|_\Xhat  C(\|\thetil\|_X,\rho, \|\Pi_f\|_{\HS(W,W^*)})\|\theta-\thetil\|_{X} \nonumber\\
&=: \Nhat(\|\thetil\|_\Xhat,\rho, \|\Pi_f\|_{\HS(W,W^*)}) \|\theta-\thetil\|_X\|\theta-\thetil\|_\Xhat
\end{align}
with a quadratic rule for $\|\theta-\thetil\|$. Note that one can pass through the embedding $\Xhat\embed X$ one more time to get the bound in $\|\cdot\|_\Xhat$; however, we retain the sharper estimates in $X$- and $\Xhat$-norms. 

Finally, composing \eqref{lin-error-1} and \eqref{lin-error-2} into \eqref{lin-error-decomp}, we obtain the linearization error $E_{\text{lin}}$ as claimed in \eqref{lin-error-final}. The proof is complete.
\end{proof}

We close the section by discussing the link to the tangential cone condition.

\begin{discussion}\label{rem:TCC}\,
\begin{itemize}
\item 
Without the lifted regularity setting, one must deal with a smooth data space, which is not as realistic as $L^2$. Thus, a regularity-lifting strategy is typically a prerequisite for proving the TCC.

\item Theorem \ref{theo:linearization} forms an evident connection to the TCC. Further work is required to bridge this gap, including injectivity and closed range properties of the linearization \cite{Kindermann17},  and conditional stability estimates \cite{nguyen24}. To this end, the error bound by the weak norm $\|\cdot\|_X$ is likely to prove highly valuable. 
 
\item It is noteworthy that if the cross term $\cov(u-v,u)$ vanishes, the TCC immediately follows. This leads to an interesting question: which source models can imply this property? 

\end{itemize}
\end{discussion}

\section{Conclusion and outlooks}\label{sec:conclusion}
This article provides a general framework for passive imaging using covariance data. In contrast to traditional imaging tasks, covariance measurement squares dimensionality and nonlinearity in the models, making the inverse problems highly challenging. We have developed a backpropagation strategy via the extended adjoint state that is suitable, systematic and transparent to compute for any linear PDE. In addition, we have analyzed the extreme nonlinearity of the passive imaging problem, revealing a clear connection to the tangential cone condition, paving the way for obtaining convergence guarantees in iterative reconstruction. Going forwards, we plan to explore the following directions:

\begin{itemize}

\item The next step is to implement the proposed adjoint state-based approach for real-world applications. Preliminary promising result has been attained for inferring solar differential rotation and viscosity using correlation of inertial waves \cite{NguyenHohageFournierGizon, nguyeninertial}.

\item 
It is our ambition to fully establish the tangential cone condition and provide a general verification strategy as in \cite{TCC21}. This would be of fundamental importance for quantitative passive imaging using iterative solvers.

\item Simultaneously, the obtained linearization error suggests to investigate inversion convergence guarantees under this \enquote{relaxed} tangential cone condition. This appears to be a highly interesting and open question for regularization theory.

\end{itemize}

Finally, covariance matrices in fact live on manifolds that can be endowed with complex geometric structure, such as log-Euclidean, Riemannian and affine invariant metrics or the scaling-rotation distance \cite{Dryden09, Jung15}. Integrating these elements into our future work would elevate our perspective to encompass a much wider and more interdisciplinary viewpoint.

\section*{Acknowledgments}The author thanks Thorsten Hohage for inspiring discussions about passive imaging and Christian Aarset for valuable insights on covariance operators and for thorough proofreading. The author wishes to thank the reviewers for their insightful suggestions, leading to an improvement of the manuscript. The author acknowledges support from the DFG through Grant 432680300 - SFB 1456 (C04).

\section*{Appendix}\label{appendix}

\begin{definition}[Ill-posedness]\label{def-illposed}
A model equation $F(x)=y$ is called \enquote{well-posed} in the sense of Hadamard if it satisfies three  properties:
\begin{enumerate}[label=(\arabic*)]
\item \ul{Existence:} solutions exist.
\item \ul{Uniqueness:} any solution is unique.
\item \ul{Stability:} the solution depends continuously on the data.
\end{enumerate}
If any of these conditions are violated, we call the model equation \enquote{ill-posed}.
\end{definition}
%Most of the inverse problem are ill-posed due to its nature. 

\begin{theorem}[Compact operators are always ill-posed]\label{theo:compact-illpose}
Let $F:X\to Y$ be a linear compact operator between normed spaces, and let the quotient space $X/ker(F)$ be infinite dimensional.
Then, there exists a sequence $\{x_n\}\subset X$ such that the image converges with $F(x_n)\to 0$, while $\|x_n\|\to\infty$. In particular, if $F$ is injective, then the inverse $F^{-1}:F(X)(\subseteq Y) \to X$ is unbounded.
\end{theorem}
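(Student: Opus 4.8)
The plan is to exploit the tension between the compactness of $F$ and the infinite-dimensionality of the quotient $X/\mathrm{ker}\,F$. Since a compact operator is bounded, $F$ is continuous and $\mathrm{ker}\,F$ is closed, so $X/\mathrm{ker}\,F$ is a genuine normed space and $F$ descends to a well-defined, bounded, injective map $\hat F:X/\mathrm{ker}\,F\to Y$ with $\hat F([x])=F(x)$. The sequence we seek will be produced by first showing that $\hat F$ cannot be bounded below, i.e.\ that there is no constant $c>0$ with $\|\hat F([x])\|\ge c\,\|[x]\|$ for all $[x]$.

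First I would establish this failure of the lower bound by contradiction. Suppose such a $c>0$ exists. Because $X/\mathrm{ker}\,F$ is infinite-dimensional, Riesz's lemma produces a sequence of unit vectors $([e_n])$ in the quotient with $\|[e_n]-[e_m]\|\ge 1/2$ for $n\ne m$; one builds it inductively, applying Riesz's lemma to the spans of the previously chosen vectors, which are finite-dimensional, hence closed and proper. Lifting each $[e_n]$ to a representative $x_n\in X$ with $\|x_n\|\le 2$ yields a bounded sequence in $X$, so compactness of $F$ forces $(Fx_n)$ to have a convergent subsequence. But the assumed lower bound gives $\|Fx_n-Fx_m\|=\|\hat F([e_n]-[e_m])\|\ge c\,\|[e_n]-[e_m]\|\ge c/2$, so $(Fx_n)$ admits no Cauchy subsequence, a contradiction. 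Hence no such $c$ exists.

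Negating the lower bound, for every $n$ I can select $[z_n]$ with $\|[z_n]\|=1$ and $\|\hat F([z_n])\|<1/n^2$. Choosing representatives $x_n'$ with $1\le\|x_n'\|\le 2$ (the lower bound coming from $\|[z_n]\|\le\|x_n'\|$), I then rescale by setting $x_n:=n\,x_n'$. This delivers $\|x_n\|\ge n\to\infty$ while $\|Fx_n\|=n\,\|\hat F([z_n])\|<1/n\to 0$, which is precisely the asserted sequence.

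Finally, in the injective case $\mathrm{ker}\,F=\{0\}$, so $F^{-1}:F(X)\to X$ is defined, and the images $y_n:=Fx_n\to 0$ satisfy $F^{-1}y_n=x_n$ with $\|x_n\|\to\infty$. Normalizing $\tilde y_n:=y_n/\|y_n\|$ (here $y_n\ne 0$ for large $n$, since otherwise injectivity would force $x_n=0$, contradicting $\|x_n\|\to\infty$) gives unit vectors with $\|F^{-1}\tilde y_n\|=\|x_n\|/\|y_n\|\to\infty$, so $F^{-1}$ is unbounded. The one genuinely delicate point is the contradiction step, where Riesz's lemma (encoding non-compactness of the unit sphere in infinite dimensions) must be played off against the compactness of $F$; everything after the failure of the lower bound, namely the rescaling and the injective conclusion, is routine bookkeeping.
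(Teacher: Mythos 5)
Your argument is correct and complete. Note that the paper itself does not prove this theorem at all: its ``proof'' consists solely of a citation to \cite[Theorem 1.17]{Kirsch}, so you have supplied a genuine proof where the paper defers to a reference. Your route is the standard one and matches the spirit of the cited result: pass to the quotient $X/\ker F$ (a legitimate normed space since compactness implies boundedness, hence $\ker F$ is closed), show the induced injective map $\hat F$ cannot be bounded below by playing Riesz's lemma (an almost-orthogonal sequence of unit cosets, lifted to representatives of norm at most $2$) against compactness of $F$ (which forces a Cauchy subsequence of images, contradicting the uniform separation $\|Fx_n - Fx_m\|\geq c/2$), and then rescale the witnesses of the failed lower bound to get $\|x_n\|\to\infty$ with $F(x_n)\to 0$. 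The small details are all handled properly: the lower bound $1\le\|x_n'\|$ follows from the quotient norm being an infimum over the coset, the choice $\|\hat F([z_n])\|<1/n^2$ survives the multiplication by $n$, and in the injective case you correctly rule out $y_n=0$ before normalizing. The only stylistic remark is that the final normalization is not strictly needed --- $y_n\to 0$ with $\|F^{-1}y_n\|\to\infty$ already violates continuity of $F^{-1}$ at $0$ --- but it does no harm and exhibits unboundedness on the unit sphere explicitly.
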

\begin{proof}
\cite[Theorem 1.17]{Kirsch}.
\end{proof}
This means that for compact forward operators $F$, even a small amount of noise in the measurement can lead to arbitrarily large reconstruction error. This phenomenon is known as \emph{instability}, i.e.~the converse of the third property in Definition \ref{def-illposed}.

%\printbibliography
\bibliographystyle{siamplain}
\bibliography{lit.bib}
\end{document}